\newtheorem{thm}{Theorem}[section]
\newtheorem{lemma}[thm]{Lemma}
\newtheorem{prop}[thm]{Proposition}
\theoremstyle{remark}
\newtheorem{remark}{Remark}[section]
\theoremstyle{definition}
\newtheorem{defi}{Definition}[section]
\newcommand{\R}{\mathbb{R}}
\newcommand{\C}{\mathbb{C}}
\newcommand{\G}{\mathcal{G}}
\newcommand{\D}{\mathcal{D}}
\newcommand{\Q}{\mathcal{Q}}
\newcommand{\K}{\mathcal{K}}
\newcommand{\M}{\mathcal{M}}
\renewcommand{\L}{\mathcal{L}}
\newcommand{\F}{\mathcal{F}}
\newcommand{\h}{\mathcal{R}}
\newcommand{\cN}{{\mathcal{N}}}
\newcommand{\cU}{{\mathcal{U}}}
\newcommand{\E}{\mathrm{E}}
\newcommand{\re}{\mathrm{Re}}
\newcommand{\sech}{\mathrm{sech}}
\renewcommand{\v}{\mathrm{v}}
\newcommand{\I}{{\mathbb{I}}}
\newcommand{\ep}{\varepsilon}
\newcommand{\om}{\omega}
\newcommand{\la}{\Lambda}
\newcommand{\ga}{\Gamma}
\newcommand{\dx}{\,dx}
\newcommand{\dy}{\,dy}
\newcommand{\dxe}{\,dx_e}
\newcommand{\dtau}{\,d\tau}
\DeclareMathOperator*{\esssup}{ess\,sup}
\newcommand{\lap}{\Delta_\G}
\newcommand{\lapd}{\widetilde{\Delta}_\G}
\newcommand{\Dg}{\D_\G}
\newcommand{\Dgc}{\D_{\G,c}}
\newcommand{\Dec}{\D_{e,c}}
\newcommand{\Dm}{\widetilde{\D}_{\G,c}}
\newcommand{\Dme}{\widetilde{\D}_{e,c}}
\newcommand{\dom}{\mathrm{dom}}
\newcommand{\dv}[2]{\frac{d#1}{d#2}}
\renewcommand{\geq}{\geqslant}
\renewcommand{\leq}{\leqslant}
\newcommand{\f}[2]{\frac{#1}{#2}}
\newcommand{\tf}[2]{\tfrac{#1}{#2}}
\newcommand{\be}{\begin{equation}}
\newcommand{\ee}{\end{equation}}
\tikzstyle{infinito}=[circle,inner sep=0pt,minimum size=0mm]
\tikzstyle{nodo}=[circle,draw,fill,inner sep=0pt, minimum size=0.5*width("k")]
\tikzstyle{nodo_vuoto}=[circle,draw,inner sep=0pt, minimum size=0.5*width("k")]
\tikzset{every loop/.style={min distance=10mm,in=300,out=240,looseness=10}}
\tikzset{place/.style={circle,thick,draw=blue!75,fill=blue!20,minimum
size=6mm}}
\tikzset{place2/.style={circle,thick,draw=red!75,fill=red!20,minimum
size=6mm}}
\title[On the NLDE on noncompact metric graphs]{On the Nonlinear Dirac equation on noncompact metric graphs}
\author[W. Borrelli]{William Borrelli}
\address[W. Borrelli]{Scuola Normale Superiore, Centro De Giorgi, Piazza dei Cavalieri 3, I-56100, Pisa, Italy.} 
\email{william.borrelli@sns.it}
\author[R. Carlone]{Raffaele Carlone}
\address[R. Carlone]{Universit\`{a} ``Federico II'' di Napoli, Dipartimento di Matematica e Applicazioni ``R. Caccioppoli'', MSA, via Cinthia, I-80126, Napoli, Italy.} 
\email{raffaele.carlone@unina.it}
\author[L. Tentarelli]{Lorenzo Tentarelli}
\address[L. Tentarelli]{Politecnico di Torino, Dipartimento di Scienze Matematiche ``G.L. Lagrange'', Corso Duca degli Abruzzi 24, 10129, Torino, Italy.} 
\email{lorenzo.tentarelli@polito.it}
\begin{document}


\begin{abstract}
The paper discusses the Nonlinear Dirac Equation with Kerr-type nonlinearity (i.e., $|\psi|^{p-2}\psi$) on noncompact metric graphs with a finite number of edges, in the case of Kirchhoff-type vertex conditions. Precisely, we prove local well-posedness for the associated Cauchy problem in the operator domain and, for infinite $N$-star graphs, the existence of standing waves bifurcating from the trivial solution at $\omega=mc^2$, for any $p>2$. In the Appendix we also discuss the nonrelativistic limit of the Dirac-Kirchhoff operator.
\end{abstract}


\maketitle

\vspace{-.5cm}
{\footnotesize AMS Subject Classification: 35R02, 35Q41, 81Q35,47J07, 58E07, 47A10.}
    \smallskip

{\footnotesize Keywords: nonlinear Dirac equation, metric graphs, local well-posedness, bound states, implicit function theorem, bifurcation, perturbation method, nonrelativistic limit.}


\section{Introduction and main results}

Following the discussion initiated in \cite{BCT-SIMA}, in this paper we investigate the NonLinear Dirac Equation (NLDE) on noncompact metric graphs with a finite number of edges. In particular, we address both the local well-posedness of the time-dependent problem in the operator domain and the existence of the standing waves. Precisely, while the former problem is studied under very general assumptions, the result concerning the standing waves is limited to the so-called infinite $N$-star graphs. On the other hand, both the questions are discussed for any superquadratic power nonlinearity.

Before presenting the main results of the paper, however, some basics on metric graphs and some comments on the existing literature are in order.


\subsection{Basics on metric graphs}

Metric graphs (see, e.g., Figure \ref{fig-metric}) are connected  \emph{multigraphs} $\G=(\mathrm{V},\mathrm{E})$ endowed with a parametrization that associates each bounded edge $e\in\mathrm{E}$ with a closed and bounded interval of the real line $I_e=[0,\ell_e]$, and each unbounded edge $e\in\mathrm{E}$ with a copy of the half-line $I_e=\R^+$ (for details see, e.g., \cite{AST-CVPDE,BK}). As a consequence, they are locally compact metric spaces, the metric being given by the shortest distance along the edges. The orientation of the variable $x_e$ of each edge is free on the bounded edges, while on the unbounded edges, we set $x_e=+\infty$ for the vertices at infinity.

\begin{figure}[h]
\centering
{\includegraphics[width=.5\columnwidth]{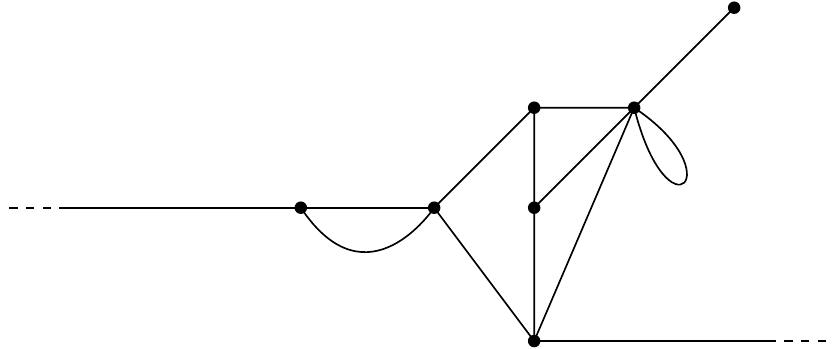}}
\caption{An example of noncompact metric graph.}
\label{fig-metric}
\end{figure}
\begin{remark}
Other choices are of course possibile for unbounded edges. Namely, one can identify (some of) them with the half-line $(-\infty,0)$ setting $x_e=-\infty$ for the vertex at infinity. Those edges can be considered then as `ingoing' half-lines. The choice made in the present paper is to consider only `outgoing' half-lines, in order to simplify notations.
\end{remark}

In addition, metric graphs are said to be \emph{compact} if they consist of a finite number of bounded edges and \emph{noncompact} otherwise. In particular, noncompact graphs consisting only of a finite number of unbounded edges incident at the same vertex are called \emph{infinite} $N$\emph{-star graphs} (see, e.g., Figure \ref{fig-nstar}).

\begin{figure}[h]
\centering
{\includegraphics[width=.35\columnwidth]{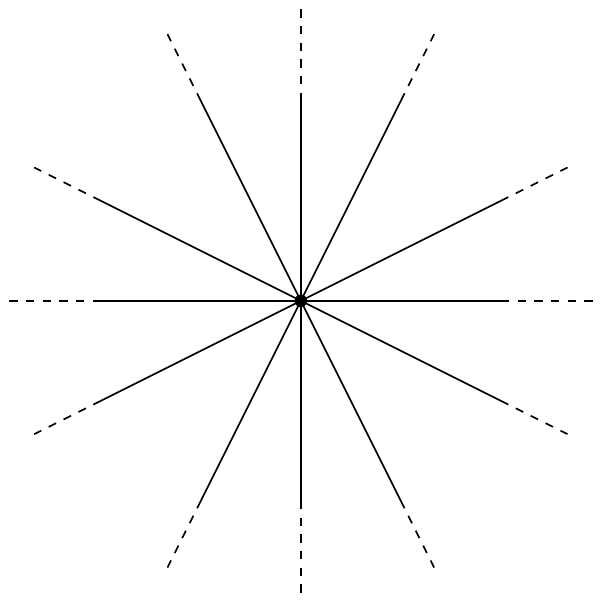}}
\caption{Example of infinite $N$-star graph ($N=12$).}
\label{fig-nstar}
\end{figure}

In view of their metric structure, one can define functions on metric graphs, i.e. $u:\G\to\C$, as families of functions $(u_e)_{e\in\mathrm{E}}$ defined on each edge, i.e. $u_e:I_e\to\C$, in such a way that $u_{|_e}=u_e$. Consistently, 
\[
 L^p(\G):=\bigoplus_{e\in\mathrm{E}}L^p(I_e),\qquad p\in[1,\infty],
\]
and
\[
 H^m(\G):=\bigoplus_{e\in\mathrm{E}}H^m(I_e),\qquad m\in[1,\infty],
\]
both endowed with the natural norms and, in the latter case, scalar products. Analogously, one can define \emph{spinors} on graphs, i.e. $\psi=(\phi,\chi)^T:\G\to\C^2$, as a collection of 2-spinors $(\psi_e)_{e\in\mathrm{E}}$ defined on each edge, i.e. $\psi_e=(\phi_e,\chi_e)^T:I_e\to\C^2$, so that Lebesgue and Sobolev spaces are again given by
\[
 L^p(\G,\C^2):=\bigoplus_{e\in\mathrm{E}}L^p(I_e,\C^2),\qquad p\in[1,\infty],
\]
and
\[
 H^m(\G,\C^2):=\bigoplus_{e\in\mathrm{E}}H^m(I_e,\C^2),\qquad m\in[1,\infty],
\]
endowed with the natural norms. 


\subsection{Background and known results}
\label{subsec:back}

Metric graphs gained a certain popularity in the last years as they are supposed to be an effective model for the study of physical systems confined in branched thin structures. The main applications of these models concern Bose-Einstein Condensates (BEC) and nonlinear optical fibers \cite{GW-PRE, HC-PD}. More precisely, the main features of these physical systems are well described by the \emph{focusing} NonLinear Schr\"odinger Equation (NLSE), i.e.
\begin{equation}
 \label{eq-NLStime}
 \imath\f{\partial w}{\partial t}=-\lap w-|w|^{p-2}\,w,\qquad p\geq2,
\end{equation}
where $-\lap$ is a suitable self-adjoint realizations of the standard Laplacian on a graph (see, e.g., \cite{AST-PARMA,GS-AP,GW-PRE,LLMOSCMWCCT-PLA} and references therein).

Most of the studies on this equation have focused on the \emph{standing-waves}, i.e. $L^2$-solutions of \eqref{eq-NLStime} of the form $w(t,x):=e^{-\imath\lambda t}u(x)$, in the case where $-\lap$ represents the self-adjoint operator usually called \emph{Kirchhoff} Laplacian, which is defined as follows.

\begin{defi}[Kirchhoff Laplacian]
\label{defi-lapk}
 The Laplacian with \emph{Kirchhoff vertex conditions}, denoted by $-\lap$, is the operator with action
 \[
  -\lap u_{|e}:=-u_e'',\qquad\forall e\in\E,
 \]
 (where $'$ denotes the derivative with respect the $x_e$ variable) and domain
 \[
 \dom(-\lap):=\big\{u\in H^2(\G):\text{\eqref{eq-cont} and \eqref{eq-kirch} are satisfied}\big\},
\]
with
\begin{gather}
 \label{eq-cont} u_e(\v)=u_f(\v),\qquad\forall e,f\succ\v,\qquad\forall\v\in \mathrm{V}_\K,\\[.2cm]
 \label{eq-kirch} \sum_{e\succ\v}\dv{u_e}{x_e}(\v)=0,\qquad\forall\v\in \mathrm{V}_\K,
\end{gather}
$e\succ\v$ meaning that $e$ is incident at $\v$, and $\mathrm{V}_\K$ denotes the set of vertices on the \emph{compact core} $\K$ of the graph (i.e., the subgraph of the bounded edges) and $\f{du_e}{dx_e}(\v)$ representing $u_e'(0)$ or $-u_e'(\ell_e)$ depending whether $\v$ is the initial or terminal vertex of the edge.
\end{defi}

 Looking for standing waves of \eqref{eq-NLStime} is equivalent to finding functions $u$ such that
\begin{equation}
\label{eq-NLS}
\left\{
\begin{array}{l}
 \displaystyle u\in\dom(-\lap),\,u\neq 0,\\[.2cm]
 \displaystyle -u_e''-|u_e|^{p-2}u_e=\lambda u_e,\qquad\forall e\in\E,
 \end{array}
 \right.
\end{equation}
which are also called \emph{bound states}. Results on the existence and multiplicity or nonexistence of the bound states for noncompact graphs with a finite number of edges can be found, for instance, in \cite{AST-CVPDE,AST-JFA,AST-CMP,AST-CVPDE2,DST-ADV,KP-JDE,NPS-Non,NRS-JDE}, while for the investigation of periodic graphs (e.g., Figure \ref{fig-period}) and compact graphs (e.g., Figure \ref{fig-metric} without the two half-lines) we mention, for instance, \cite{ADST-APDE,D-NODEA,GPS-NODEA,PS-AHP} and \cite{CDS-MJM,D-JDE}. In addition, other self-adjoint realization of the Laplacian on graphs have been discussed in the last years. We recall \cite{ACFN-RMP,GI-JPA,GW-GOL} for the study of time-dependent problems and \cite{ACFN-ANHIPC,CFN-Non} for the study of the standing waves. Finally, it is worth recalling that also some problems of control theory have been studied on graphs (see \cite{AD-p1,D-p}).

\begin{figure}[t]
\centering
\subfloat[][one-dimensional periodic graph.]
{\includegraphics[width=.65\columnwidth]{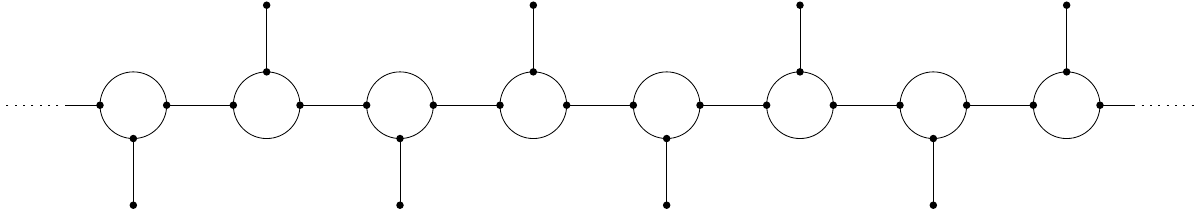}}
\\[.5cm]
\subfloat[][two-dimensional grid.]
{\includegraphics[width=.65\columnwidth]{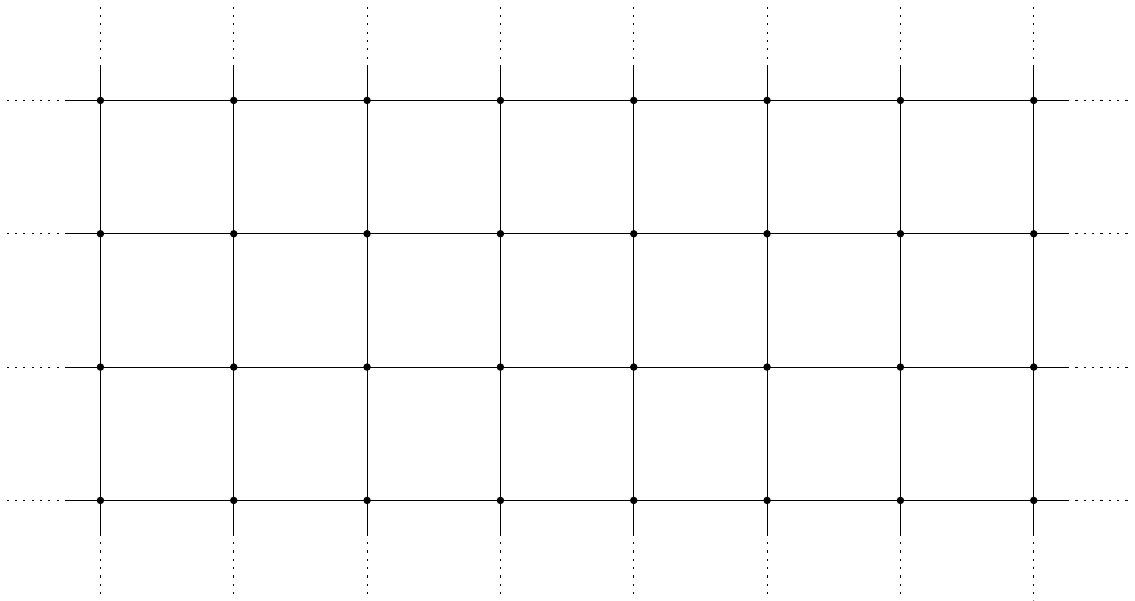}}
\caption{Two examples of periodic graphs.}
\label{fig-period}
\end{figure}

\medskip
Recently, the NLDE with Kerr-type nonlinearity
\[
 \imath\f{\partial \Psi}{\partial t}=\D\Psi-|\Psi|^{p-2}\Psi,\qquad p\geq2,
\]
where $\D$ is the $d$-dimensional Dirac operator, has attracted growing interest, mainly in the case $d=2$. Indeed, it has been proposed as an effective equation for the so-called \emph{Dirac materials} (such as, for instance, \emph{graphene} and \emph{germanene} \cite{WBB-AdP}) and for BEC in some particular regimes where relativistic effects cannot be neglected (see \cite{HC-PD}). The derivation of these effective models has been discussed in \cite{AS-JMP,FLW-APDE,FW-JAMS,FW-CMP}, while an exhaustive investigation of the standing waves can be found in \cite{B-JDE,B-JMP,B-CVPDE,BF-TAMS}.

In addition, some recent works proposed the NLDE as an effective equation for BEC in branched thin domains and optical fibers, in the presence of relativistic effects (see \cite{SBMK-JPA,TLB-AP}). The self-adjoint realization of the Dirac operator employed in these applications is the so-called \emph{Kirchhoff-type} one (for details see \cite[Appendix A]{BCT-SIMA} and \cite{BCT-p}), which is defined as follows:

\begin{defi}[Dirac-Kirchhoff]
\label{defi-dirac}
 The Dirac operator with \emph{Kirchhoff-type vertex conditions}, that we denote by $\Dg$, is an operator with action
\begin{equation}
 \label{eq-Dstand}
 \Dg\psi_{|_e}=\D_e\psi_e:=-\imath c\sigma_{1}\psi_e'+mc^{2}\sigma_{3}\psi_e,\qquad\forall e\in\E,
\end{equation}
$m>0$ and $c>0$ representing the \emph{mass} of the generic particle of the system and a \emph{relativistic parameter} (respectively), and $\sigma_1$ and $\sigma_3$ representing the \emph{Pauli matrices}
\begin{equation}
 \label{eq-pauli}
 \sigma_1:=\begin{pmatrix}
  0 & 1 \\
  1 & 0
 \end{pmatrix}
 \qquad\text{and}\qquad
 \sigma_3:=\begin{pmatrix}
  1 & 0 \\
  0 & -1
 \end{pmatrix},
\end{equation}
and domain
\[
 \dom(\Dg):=\{\psi=(\phi,\chi)^T\in H^1(\G):\text{\eqref{eq-cont_bis} and \eqref{eq-kirch_bis} are satisfied}\},
\]
where
\begin{gather}
 \label{eq-cont_bis} \phi_e(\v)=\phi_f(\v),\qquad\forall e,f\succ\v,\qquad\forall\v\in\mathrm{V}_\K,\\[.2cm]
 \label{eq-kirch_bis} \sum_{e\succ\v}\chi_e^{\pm}(\v)=0,\qquad\forall\v\in\mathrm{V}_\K,
\end{gather}
$\chi_e^{\pm}(\v)$ denoting $\chi_e(0)$ or $-\chi_e(\ell_e)$ depending on the orientation of the edge.
\end{defi}

\begin{remark}
 It is clear that the operator $\Dg$ does depend on both $m$ and $c$. However, in the following, this dependence will be omitted whenever it is not relevant. Also, we recall that the constant $c>0$ appearing in \eqref{eq-Dstand} usually represents the speed of light in relativistic theories. Anyway, since the model in the present paper is just an effective one, in this case it should be meant as a generic physical parameter that formally plays the same role. 
\end{remark}

\begin{remark}
 \label{rem-kirchname}
 The operator $\Dg$ is said Dirac-Kirchhoff operator since, if one considers the operator $\Dg^2$ on spinors consisting of the first component only, then one obtains precisely the Kirchhoff-Laplacian, plus zero-order terms. This is not the unique possible choice to find a selfadjoint realization of the Dirac operator on graphs; other choices have been studied, e.g., in \cite{BH-JPA,BT-JMP,H,P}.
\end{remark}
\begin{remark}
Notice that, since we are considering a first order differential operator, its definition depends on the orientation for the edges. For instance, in the case of two half-lines joined at the origin, a simple computation shows that the operator \eqref{defi-dirac} coincides with the standard Dirac operator on the real line, with domain $H^1(\R)$, only if one half-line is identified as $(-\infty,0)$ and the other with $(0,\infty)$.
\end{remark}

From the mathematical point of view, the sole study of the NLDE on graphs, i.e.
\begin{equation}
 \label{eq-NLDtime}
 \imath\f{\partial \Psi}{\partial t}=\Dg\Psi-|\Psi|^{p-2}\Psi,\qquad p\geq2,
\end{equation}
has been presented in \cite{BCT-SIMA}. More precisely, the paper discusses a slightly modified model in which the nonlinear potential affects just the compact core of the graph, i.e.
\begin{equation}
 \label{eq-NLDtimeconc}
 \imath\f{\partial \Psi}{\partial t}=\Dg\Psi-\chi_{\K}|\Psi|^{p-2}\Psi,\qquad p\geq2.
\end{equation}
This idea of considering a localized nonlinearity was first introduced in \cite{GSD-PRA} and widely studied in \cite{DT-CVPDE,DT-OTAA,ST-JDE,ST-NA,T-JMAA} in the Schr\"odinger case, with a specific focus on existence/nonexistence and multiplicity of the bound states, which in this case read
\begin{equation}
\label{eq-NLSconc}
\left\{
\begin{array}{l}
 \displaystyle u\in\dom(-\lap),\,u\neq 0,\\[.2cm]
 \displaystyle -u_e''-\chi_{\K}|u_e|^{p-2}u_e=\lambda u_e,\qquad\forall e\in\E,
 \end{array}
 \right.
\end{equation}
(see also \cite{BCT-SYM} for an overview on the subject).

Also regarding \eqref{eq-NLDtimeconc}, the discussion developed in \cite{BCT-SIMA} concerns the bound states, here defined as the spinors $\psi$ such that
\begin{equation}
\label{eq-NLDconc}
\left\{
\begin{array}{l}
 \displaystyle \psi\in\dom(\Dg),\,\psi\neq 0,\\[.2cm]
 \displaystyle \D_e\psi_e-\chi_{\K}|\psi_e|^{p-2}\psi_e=\omega \psi_e,\qquad\forall e\in\E,
 \end{array}
 \right.
\end{equation}
(so that $\Psi(t,x)=e^{-\imath\om t}\psi(x)$ is an $L^2$-solution of \eqref{eq-NLDtimeconc}, namely a standing wave). In particular, \cite{BCT-SIMA} proves that for every $\omega\in(-mc^2,mc^2)$ there exist infinitely many distinct bound states and discusses their behavior in the so-called \emph{nonrelativistic limit} ``$c\to+\infty$''. More precisely, one shows that for any sequence $c_n\to+\infty$, there exist (at least) a sequence $\omega_n\to+\infty$ such that any associated sequence of bound states $(\psi_n)_n$ converges (up to subsequences) to a spinor $(u,0)^T$ where $u$ is a bound state for the NLSE with localized nonlinearity.

\begin{remark}
 Actually, the limit function $u$ solves a version of \eqref{eq-NLSconc} with a coefficient $2m$ in front of the nonlinearity. However, this is consistent with the fact that in the nonrelativistic limit the kinetic part of the hamiltonian of a particle is formally given by $\frac{-\lap}{2m}$, in place of $-\lap$.
\end{remark}

\begin{remark}
 Recall that, since the spectrum of $\Dg$ is given by
 \[
  \sigma(\Dg)=(-\infty,-mc^2]\cup[mc^2,+\infty)
 \]
 (as proved in \cite[Appendix A]{BCT-SIMA} and \cite{BCT-p}), then $(-mc^2,mc^2)=\R\backslash\sigma(\Dg)$.
\end{remark}


\subsection{Main results}

In this paper, in contrast to \cite{BCT-SIMA}, we aim at discussing the NLDE on graphs without the assumption of the localization of the nonlinearity.

 As for the Schr\"odinger case (see \cite{ACFN-RMP}), the first aspect to be investigated concerning \eqref{eq-NLDtime} is the well-posedness of the associated Cauchy problem. It provides, indeed, the actual consistency of \eqref{eq-NLDtime} as an effective model for study of the dynamics of physical systems (such as, for instance, BEC in branched thin domains and optical fibers in the presence of relativistic effects).

Precisely, we state local well-posedness of the Cauchy problem associated with \eqref{eq-NLDtime} in the operator domain.

\begin{thm}[Local well-posedness]
\label{thm:lwp}
Let $\G$ be a noncompact metric graph with a finite number of edges and let $p>2$. Then:
\begin{itemize}
 \item[(i)] for any $\Psi_0\in \dom(\Dg)$, there exists $T>0$ such that there exists a unique solution 
\be
\label{eq:solution}
\Psi\in C([0,T],\dom(\Dg))\cap C^1([0,T],L^2(\G,\C^2)),
\ee
of the Cauchy problem
\be
\label{eq:cauchy2}
 \left\{\begin{array}{l}
  \displaystyle \imath\f{\partial \Psi}{\partial t}=\Dg\Psi-|\Psi|^{p-2}\Psi \\[.4cm]
  \displaystyle \Psi(0,\cdot\,)=\Psi_0
 \end{array}\right.;
\ee

\item[(ii)] a blow-up alternative holds, namely, letting $T^*$ be the supremum of the times for which (i) holds, there results
\[
\limsup_{t\to T^*}\|\Psi(t,\cdot\,)\|_{\Dg}<\infty\quad\implies\quad T^*=+\infty,
\]
where $\|\,\cdot\,\|_{\Dg}$ denotes the graph norm associated with $\Dg$, i.e.
\[
 \|\psi\|_{\Dg}:=\|\psi\|_{L^2(\G,\C^2)}+\|\Dg\psi\|_{L^2(\G,\C^2)};
\]

\item[(iii)] the $L^2$-norm of the solution is preserved along the flow, i.e.
\[
\|\Psi(t,\cdot\,)\|_{L^2(\G,\C^2)}=\|\Psi_0(\,\cdot\,)\|_{L^2(\G,\C^2)},\qquad\forall t\in(0,T^*);
\]
\item[(iv)] the energy of the solution is preserved along the flow, i.e.
\begin{multline}
\label{eq:energy}
\qquad\qquad E\big(\Psi(t,\cdot\,)\big):=\f{1}{2}\int_\G\langle\Psi(t,x),\Dg\Psi(t,x)\rangle_{\C^2}\dx+\\[.1cm]
-\frac{1}{p}\int_\G|\Psi(t,x)|^p\dx=E\big(\Psi_0\big),\qquad\forall t\in(0,T^*)
\end{multline}
(where $\langle\,\cdot\,,\,\cdot\,\rangle_{\C^2}$ denotes the standard hermitian product of $\C^2$).
\end{itemize}
\end{thm}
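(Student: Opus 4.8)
The plan is to recast the Cauchy problem \eqref{eq:cauchy2} in Duhamel form and solve it by a fixed--point argument, exploiting that $\Dg$ is self-adjoint: by Stone's theorem it generates a strongly continuous one-parameter unitary group $(e^{-\imath t\Dg})_{t\in\R}$ on $L^2(\G,\C^2)$, which commutes with $\Dg$ and therefore leaves $\dom(\Dg)$ invariant. Two elementary facts will be used repeatedly. First, by the explicit form \eqref{eq-Dstand} and the unitarity of the Pauli matrices, the graph norm $\|\cdot\|_{\Dg}$ is equivalent to the $H^1(\G,\C^2)$-norm on $\dom(\Dg)$; and, $\G$ having finitely many edges, $H^1(\G,\C^2)\hookrightarrow L^\infty(\G,\C^2)$, so that $\dom(\Dg)\hookrightarrow L^\infty(\G,\C^2)$. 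Second, writing $F(\Psi):=|\Psi|^{p-2}\Psi$, for every $p>2$ one has $\|F(\Psi)\|_{L^2}\leq\|\Psi\|_{L^\infty}^{p-2}\|\Psi\|_{L^2}$, the differential satisfies $\|F'(\Psi)\|\leq C\|\Psi\|_{L^\infty}^{p-2}$, and $F$ is Lipschitz on $L^\infty$-bounded sets when the target distance is measured in $L^2$, i.e.
\[
 \|F(\Psi_1)-F(\Psi_2)\|_{L^2(\G,\C^2)}\leq C\big(\|\Psi_1\|_{L^\infty}^{p-2}+\|\Psi_2\|_{L^\infty}^{p-2}\big)\|\Psi_1-\Psi_2\|_{L^2(\G,\C^2)}.
\]

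For part (i), fix $\Psi_0\in\dom(\Dg)$ and set $M:=2\big(\|\Psi_0\|_{\Dg}+\|\Dg\Psi_0\|_{L^2(\G,\C^2)}+\|F(\Psi_0)\|_{L^2(\G,\C^2)}\big)$, a quantity controlled by $\|\Psi_0\|_{\Dg}$ alone by the above. For $T>0$ to be chosen later, let $X_T$ be the set of $\Psi\in C([0,T],L^2(\G,\C^2))$ with $\Psi(0)=\Psi_0$, $\Psi\in L^\infty(0,T;\dom(\Dg))$, $\partial_t\Psi\in L^\infty(0,T;L^2(\G,\C^2))$, $\esssup_t\|\Psi(t)\|_{\Dg}\leq M$ and $\esssup_t\|\partial_t\Psi(t)\|_{L^2(\G,\C^2)}\leq M$; with the metric $d(\Psi,\widetilde\Psi):=\sup_t\|\Psi(t)-\widetilde\Psi(t)\|_{L^2(\G,\C^2)}$ this is a complete metric space, by weak and weak-$*$ lower semicontinuity of the two constraints. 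On $X_T$ define $\Gamma\Psi$ as the mild solution of the \emph{linear} problem $\imath\,\partial_t w=\Dg w-F(\Psi)$, $w(0)=\Psi_0$, that is $\Gamma\Psi(t):=e^{-\imath t\Dg}\Psi_0+\imath\int_0^t e^{-\imath(t-s)\Dg}F(\Psi(s))\dd s$. The crucial point is that, although $F(\Psi(s))$ does \emph{not} belong to $\dom(\Dg)$ in general (the vertex conditions \eqref{eq-cont_bis}--\eqref{eq-kirch_bis} are not preserved by $F$), the curve $s\mapsto F(\Psi(s))$ is Lipschitz from $[0,T]$ into $L^2(\G,\C^2)$, since $\|F(\Psi(t))-F(\Psi(s))\|_{L^2}\leq CM^{p-2}\|\Psi(t)-\Psi(s)\|_{L^2}\leq CM^{p-1}|t-s|$; and for a unitary group this time-regularity is enough to ensure, via a direct difference-quotient computation, that $\Gamma\Psi(t)\in\dom(\Dg)$ for every $t$, with $\Gamma\Psi\in C([0,T],\dom(\Dg))\cap C^1([0,T],L^2(\G,\C^2))$, $\Dg\,\Gamma\Psi(t)=\imath\,\partial_t\Gamma\Psi(t)+F(\Psi(t))$ and $\partial_t\Gamma\Psi(t)=e^{-\imath t\Dg}\big(-\imath\Dg\Psi_0+\imath F(\Psi_0)\big)+\imath\int_0^t e^{-\imath(t-s)\Dg}\partial_sF(\Psi(s))\dd s$. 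Since $\|F(\Psi(s))\|_{L^2},\|\partial_sF(\Psi(s))\|_{L^2}\leq CM^{p-1}$, these formulas show that $\Gamma$ maps $X_T$ into itself as soon as $T$ is small (depending only on $M$, hence on $\|\Psi_0\|_{\Dg}$); and the estimate $\|\Gamma\Psi_1(t)-\Gamma\Psi_2(t)\|_{L^2}\leq\int_0^t\|F(\Psi_1(s))-F(\Psi_2(s))\|_{L^2}\dd s\leq CM^{p-2}T\,d(\Psi_1,\Psi_2)$ makes $\Gamma$ a contraction for $T$ small. Its unique fixed point lies, by the above, in the class \eqref{eq:solution} and solves \eqref{eq:cauchy2}.

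Uniqueness in the class \eqref{eq:solution} (not merely of the fixed point) follows from $\dom(\Dg)\hookrightarrow L^\infty(\G,\C^2)$: two such solutions with the same datum satisfy the same Duhamel identity, so on any interval where both stay in a fixed $L^\infty$-ball their $L^2$-difference obeys a Gr\"onwall inequality and must vanish. Hence there is a maximal solution on $[0,T^*)$, $T^*\in(0,+\infty]$, and (ii) is standard: if $T^*<\infty$ while $\limsup_{t\to T^*}\|\Psi(t)\|_{\Dg}<\infty$, then restarting the construction at $\Psi(t_0)$ with $t_0$ close to $T^*$ extends the solution beyond $T^*$ — possible since the local existence time depends only on the $\dom(\Dg)$-norm of the datum — contradicting maximality. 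For (iii) and (iv), since $\Psi\in C^1([0,T^*),L^2(\G,\C^2))$, $\Psi(t)\in\dom(\Dg)$ and $\Dg\Psi\in C([0,T^*),L^2(\G,\C^2))$, self-adjointness of $\Dg$ and the identity $\partial_t\Psi=-\imath\Dg\Psi+\imath|\Psi|^{p-2}\Psi$ give
\[
 \tfrac{d}{dt}\tfrac12\|\Psi(t)\|_{L^2(\G,\C^2)}^2=\mathrm{Re}\!\int_\G\!\langle\partial_t\Psi,\Psi\rangle_{\C^2}\dx=0,
\]
because $\int_\G\langle\Dg\Psi,\Psi\rangle_{\C^2}\dx$ is real by self-adjointness and $\int_\G\langle|\Psi|^{p-2}\Psi,\Psi\rangle_{\C^2}\dx=\int_\G|\Psi|^p\dx$ is real; and, using that $\tfrac{d}{dt}\tfrac12\int_\G\langle\Psi,\Dg\Psi\rangle_{\C^2}\dx=\mathrm{Re}\int_\G\langle\partial_t\Psi,\Dg\Psi\rangle_{\C^2}\dx$ (legitimate by the stated regularity and self-adjointness), $\tfrac{d}{dt}E(\Psi(t))=\mathrm{Re}\int_\G\langle\partial_t\Psi,\Dg\Psi-|\Psi|^{p-2}\Psi\rangle_{\C^2}\dx=\mathrm{Re}\int_\G\langle\partial_t\Psi,\imath\,\partial_t\Psi\rangle_{\C^2}\dx=0$.

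The genuine difficulty is the lemma emphasized in the second step: propagating $\dom(\Dg)$-regularity through the Duhamel integral when the source $F(\Psi)$ violates the Kirchhoff-type vertex conditions. The classical results on the inhomogeneous evolution problem require the forcing term to be either $C^1$ in time or valued in the operator domain, and here neither holds a priori; the way out is to observe that $F(\Psi(\cdot))$ is merely Lipschitz in time with values in $L^2(\G,\C^2)$, which suffices for a unitary group, and this is precisely what dictates the slightly unusual choice of the space $X_T$ — in particular the constraint on $\partial_t\Psi$ — and forces one to establish its completeness by hand. (Equivalently, one may view $|\Psi|^{p-2}$ as a bounded, self-adjoint, time-dependent potential and apply the non-autonomous theory to the family $\Dg-|\Psi(t)|^{p-2}$; the technical core is the same.) Once this point is settled, the conservation laws and the blow-up alternative are routine.
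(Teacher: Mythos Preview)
Your proposal is correct and follows essentially the same strategy as the paper: a fixed--point argument for the Duhamel map on the space $X_T=L^\infty(0,T;\dom(\Dg))\cap W^{1,\infty}(0,T;L^2(\G,\C^2))$, contraction in the weaker $C([0,T],L^2)$ metric (Kato's device), completeness of the ball via weak-$*$ compactness, and the standard blow-up and conservation arguments. The only presentational differences are that the paper handles the key step---propagating $\dom(\Dg)$-regularity through the Duhamel integral when $F(\Psi)$ is not in the domain---by an explicit integration by parts using $\Dg^{-1}$, whereas you invoke the abstract fact that a Lipschitz-in-time forcing suffices for a unitary group; and that the paper proves mass conservation by an edge-by-edge computation exhibiting the cancellation of boundary terms via the vertex conditions, while you derive it directly from self-adjointness of $\Dg$ (which of course encodes exactly those vertex conditions).
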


\begin{remark}
 Note that the sign in front of the nonlinearity in \eqref{eq:cauchy2} is not relevant for the proof of Theorem \ref{thm:lwp}. The result can be proved as well with the opposite sign.
\end{remark}

 As usual in the study of nonlinear dispersive equations, once checked the consistency of the model, the most natural question concerns the existence or nonexistence of the stationary solutions of \eqref{eq-NLDtime}. In the context of metric graphs, lacking general results, it is common to study first graphs that present trivial compact core, that is infinite $N$-star graphs (see again \cite{ACFN-RMP}). Therefore, the second main result of our paper deals with the existence of bound states of the NLDE on an $N$-star graph, i.e., spinors $\psi$ such that
\begin{equation}
\label{eq:stat}
\left\{
\begin{array}{l}
 \displaystyle \psi\in\dom(\Dg),\,\psi\neq 0,\\[.2cm]
 \displaystyle \D_e\psi_e-|\psi_e|^{p-2}\psi_e=\omega\psi_e,\qquad\forall e=1,\dots,N
 \end{array}
 \right.
\end{equation}
(with a little abuse of notation we denote by $e$ both the edge itself and its index). In particular, we prove such bound states to be strictly related to those of the NLSE, precisely to 
\begin{equation}
 \label{eq-NLSaux}
 \left\{
 \begin{array}{l}
 \displaystyle u\in\dom(-\lap),\,u\neq 0,\\[.2cm]
 \displaystyle \f{u''_{e}}{2m}+|u_{e}|^{p-2}u_{e}=u_{e},\qquad\forall e=1,\ldots,N\,.
 \end{array}
 \right.
\end{equation}

\begin{thm}[Standing waves]
\label{thm:standingwaves}
Let $\G$ be an infinite $N$-star graph and let $p>2$. Then, there exists $\ep_0>0$ such that for every $\om\in(mc^2-\ep_0,mc^2)$ there is at least a spinor $\psi_\om\neq0$ that satisfies \eqref{eq:stat}.

More precisely, for every real valued solution $U$ to \eqref{eq-NLSaux}, there exist $\ep_0>0$ such that for all $\om\in(mc^2-\ep_0,mc^2)$ there exists $(u_\om,v_\om)^T\in\dom(\Dg)$, $u_\om,v_\om\neq0$, that satisfies
\begin{equation}
 \label{eq-systaux}
 \left\{
 \begin{array}{l}
  -\imath v_{\om,e}'+u_{\om,e}=\big(|u_{\om,e}|^2+(mc^2-\om)|v_{\om,e}|^2\big)^{\f{p-2}{2}}u_{\om,e}\\[.3cm]
  -\imath u_{\om,e}'-(mc^2+\om)v_{\om,e}=(mc^2-\om)\big(|u_{\om,e}|^2+(mc^2-\om)|v_{\om,e}|^2\big)^{\f{p-2}{2}}v_{\om,e}
 \end{array}
 \right.
\end{equation}
with
\[
(u_{\om,e},v_{\om,e})^T_{|\om=mc^2}=\big(U_e,-\imath\f{U_e'}{2m}\big),
\]
for every $e=1,\dots,N$, and such that for every $\om\in(mc^2-\ep_0,mc^2)$ the spinor $\psi_\om=(\phi_\om,\chi_\om)^T$ defined by
\begin{equation}
\label{eq:sol_scaling}
\begin{array}{l}
 \displaystyle \phi_{\om,e}:=(mc^2-\om)^{\f{1}{p-2}}\,u_{\om,e}\big(\sqrt{mc^2-\om}\,x_e\big),\qquad\forall e=1,\ldots,N,\\[.3cm]
 \displaystyle \chi_{\om,e}:=(mc^2-\om)^{\f{p}{2(p-2)}}\,v_{\om,e}\big(\sqrt{mc^2-\om}\,x_e\big),\qquad\forall e=1,\ldots,N,
\end{array}
\end{equation}
fulfills \eqref{eq:stat}.
\end{thm}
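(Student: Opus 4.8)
The plan is to read Theorem~\ref{thm:standingwaves} as a bifurcation of nonzero solutions of \eqref{eq:stat} from the trivial one at the bottom $\om=mc^2$ of the essential spectrum of $\Dg$, desingularized by the rescaling \eqref{eq:sol_scaling}. The first step is to verify that \eqref{eq:sol_scaling} converts \eqref{eq:stat} into \eqref{eq-systaux}: inserting the two rescaled components into \eqref{eq:stat}, one sees that the exponents $\tf{1}{p-2}$, $\tf{p}{2(p-2)}$ and the dilation factor $\sqrt{mc^2-\om}$ are exactly those that make every power of $mc^2-\om$ cancel, so that $\psi_\om$ solves \eqref{eq:stat} if and only if $(u_\om,v_\om)$ solves \eqref{eq-systaux}; moreover the two conditions defining $\dom(\Dg)$ on an $N$-star --- continuity of the first component at the central vertex $\v$ and $\sum_{e}\chi_e^{\pm}(\v)=0$ --- are homogeneous and decouple component-wise, hence are preserved by the (component-wise linear) rescaling. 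So it suffices to produce a nonzero $(u_\om,v_\om)$ solving \eqref{eq-systaux} for $\om$ near $mc^2$ with the prescribed value at $\om=mc^2$, the admissible range shrinking to $(mc^2-\ep_0,mc^2)$ only because \eqref{eq:sol_scaling} requires $mc^2-\om>0$.

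I would then set \eqref{eq-systaux} up for the implicit function theorem. Eliminating $v$ from the second equation of \eqref{eq-systaux} at $\om=mc^2$ gives exactly \eqref{eq-NLSaux}, so any real solution $U$ of \eqref{eq-NLSaux} furnishes the solution $\big(U,-\imath\tf{U'}{2m}\big)$ of \eqref{eq-systaux} at $\om=mc^2$. Because \eqref{eq-systaux} is invariant under the gauge action $(u,v)\mapsto(e^{\imath\theta}u,e^{\imath\theta}v)$, the bifurcation is never transversal on the full complex space; this is fixed by restricting to the ``real-type'' class ($u$ real, $v$ purely imaginary), which is left invariant by \eqref{eq-systaux} and contains the reference solution. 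Writing $G(\om,(u,v))$ for the pair of equations in \eqref{eq-systaux}, one gets a $C^1$ map $G\colon(mc^2-\ep,mc^2+\ep)\times X\to Y$, with $X$ the closed subspace of $H^1(\G,\C^2)$ of real-type spinors obeying the (rescaled) vertex conditions, $Y$ the matching $L^2$-space, and $G\big(mc^2,(U,-\imath\tf{U'}{2m})\big)=0$.

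The core of the proof is the invertibility of $D_{(u,v)}G$ at $\big(mc^2,(U,-\imath\tf{U'}{2m})\big)$. Solving the linearized second equation for $\delta v$ and substituting into the first reduces this to the invertibility of the Schr\"odinger-type operator $L:=-\tf{1}{2m}\ddv{}{x}+1-(p-1)|U|^{p-2}$ on $\G$, equipped with the vertex conditions induced from $\dom(\Dg)$. Since $U_e\to0$ at the vertices at infinity, $\sigma_{\mathrm{ess}}(L)=[1,+\infty)$, so $0\notin\sigma_{\mathrm{ess}}(L)$ and it only remains to exclude $0$ from the point spectrum. On each half-edge the unique $L^2$-solution of $-\tf{1}{2m}w_e''+w_e-(p-1)|U_e|^{p-2}w_e=0$ is a multiple of $U_e'$, and whether these glue into an element of $\dom(L)$ is controlled by the boundary data $U_e(\v),U_e'(\v),U_e''(\v)$, which by the profile equation satisfy $U_e''(\v)=2m\,U_e(\v)\big(1-|U_e(\v)|^{p-2}\big)$. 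For the symmetric (``peak'') profile, for which $|U_e(\v)|>1$ and all edges carry the same function, this forces $\ker L=\{0\}$ after reducing to a half-line with a Neumann condition, so $L$ --- hence $D_{(u,v)}G$ --- is boundedly invertible and the implicit function theorem applies at once; for the remaining real profiles, which come in translation-type families, $L$ retains the one-dimensional kernel spanned by the tangent to the family and one concludes by a Lyapunov--Schmidt reduction, the scalar bifurcation equation being solved using the (translation) invariance of the autonomous system \eqref{eq-systaux}. I expect this step --- classifying $\ker L$ on the star and checking that the vertex conditions remove (enough of) the NLS zero modes --- to be the main obstacle; the rest is routine.

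Finally, the implicit function theorem (respectively the Lyapunov--Schmidt argument) produces $\ep_0>0$ and a $C^1$ branch $\om\mapsto(u_\om,v_\om)\in X$ through $\big(U,-\imath\tf{U'}{2m}\big)$ solving \eqref{eq-systaux} for $|\om-mc^2|<\ep_0$. As $U\not\equiv0$ --- hence $U'\not\equiv0$, $U$ being a nonconstant decaying profile --- continuity gives $u_\om,v_\om\neq0$ after shrinking $\ep_0$; feeding $(u_\om,v_\om)$ into \eqref{eq:sol_scaling} then yields, for $\om\in(mc^2-\ep_0,mc^2)$, a spinor $\psi_\om=(\phi_\om,\chi_\om)^T\in\dom(\Dg)$ with $\psi_\om\neq0$ satisfying \eqref{eq:stat}, which is the assertion.
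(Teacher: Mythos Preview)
Your overall strategy---rescale via \eqref{eq:sol_scaling}, solve the desingularized system at $\om=mc^2$ from a real NLS bound state, then apply the Implicit Function Theorem---is exactly the paper's. The paper obtains the Fredholm property by decomposing the linearization as $J+K(U)$ with $J$ an explicit isomorphism (Lax--Milgram, Lemma~\ref{lem-J}) and $K(U)$ compact (Lemma~\ref{lem-K}); your essential-spectrum route is an equivalent alternative, and your real-type gauge fixing is a sensible addition that the paper leaves implicit.

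The gap is in your kernel analysis for the symmetric profile. Reducing to a single half-line with a Neumann condition is legitimate only on the \emph{edge-symmetric} subspace. On the full star the $H^2$ solutions of $Lh=0$ on each edge are $h_e=c_e\varphi'$ with independent constants $c_e$; since $\varphi'(0)=0$ the continuity condition at the vertex is vacuous, while Kirchhoff becomes $\varphi''(0)\sum_e c_e=0$, i.e.\ $\sum_e c_e=0$. Hence $\ker L$ on $\dom(-\lap)$ has dimension $N-1$ for every $N\geq2$ (these are the transverse modes behind the half-soliton instability, cf.\ \cite{KP-JDE}), and the IFT does not apply directly at the symmetric $U$ on the full space $X\times Y$. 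The paper's Lemma~\ref{lem-inj} argues instead by checking whether $U'=(U_e')_e$ itself lies in $\dom(-\lap)$, without discussing general combinations $(c_eU_e')_e$; so neither argument, as written, closes the symmetric case on the full graph. You can rescue the existence statement by restricting to the closed invariant subspace of edge-symmetric spinors, where your Neumann reduction is valid and the IFT applies---this already yields ``at least one $\psi_\om$''. Conversely, your observation that the asymmetric families $U_\alpha$ carry a one-dimensional translation kernel is correct and goes beyond what Lemma~\ref{lem-inj} checks; your Lyapunov--Schmidt outline there is the right direction, but the scalar bifurcation equation still has to be solved, and ``translation invariance of the autonomous system'' must be interpreted carefully since the star graph itself is not translation-invariant (it is the internal parameter $\alpha$, not a spatial shift, that generates the kernel).
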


In other words, Theorem \ref{thm:standingwaves} claims two things. The former is that there exists at least one branch of nontrivial bound states of the NLDE which bifurcate from the trivial solution at the bifurcation point $\om=mc^2$. The latter is that one can construct a branch of bound states of the NLDE from any real-valued bound state of the NLSE, since each bound state of the NLSE gives rise to a family of solutions of \eqref{eq-systaux}, by the scaling \eqref{eq:sol_scaling}.

\begin{remark}
 Note that, if the number of half-lines is odd, then there exists only one real-valued bound state of the NLSE; while, if the number of half-lines is even, then there exist infinitely many real-valued bound states (see \cite{ACFN-EPL} and Section \ref{sec-nls}). As a consequence, in the first case, our procedure allows constructing only one branch of bound states of the NLDE, whereas in the second case the possible branches are infinitely many.
\end{remark}

\begin{remark}\label{rmk:negativeedge}
As we will show in Remark \ref{rmk:derivdefocusing}, the method adopted to prove Theorem \ref{thm:standingwaves} does not apply in the regime $\omega\to-mc^2$. Indeed, in that case the limit equation \eqref{eq-NLSaux} is replaced by the following \emph{defocusing} NLSE 
\begin{equation}
 \label{eq:defocusing}
\left\{
 \begin{array}{l}
 \displaystyle v\in\dom(-\lap),\,v\neq 0,\\[.2cm]
 \displaystyle \f{v''_{e}}{2m}-|v_{e}|^{p-2}v_{e}=v_{e},\qquad\forall e=1,\ldots,N\,.
 \end{array}
 \right.
\end{equation}
Then multiplying the equation above by $v_e$, edge by edge, and integrating by parts one easily sees that \eqref{eq:defocusing} admits the sole trivial solution.

\end{remark}

It is also worth highlighting that, as one can see from the very statement of Theorem \ref{thm:standingwaves}, in this case one has to use a completely different proof strategy with respect to \cite{BCT-SIMA}, where the problem of localized nonlinearity was addressed. More precisely, the results present in \cite{BCT-SIMA} relied on the fact that the compactness of the nonlinearity simplifies the compactness analysis of the Palais-Smale sequences of the action functional
\[
 \mathcal{L}(\psi):=E(\psi)-\f{\om}{2}\int_\G|\psi(x)|^2\dx.
\]
At the moment, it is not clear how to adapt the techniques used in \cite{BCT-SIMA} to problems with an extended nonlinearity on a graph, even in the Schr\"odinger case. Nevertheless, it is possible to adapt a method introduced by Stuart in \cite{S-SIAM} (see also \cite{BC,OUN-DIE}) based on a suitable scaling of the equation and on the use of the Implicit Function Theorem (see Section \ref{sec:standingwaves}). Unfortunately, this approach requires the restriction of the discussion to infinite $N$-star graph (see also Remark \ref{rem-restriction1}). The discussion of the existenxe/nonexistence of stationary solutions on more general graphs and of the stability properties of such solutions, which are the natural further steps of our research, will be addressed in a forthcoming paper.

\medskip
Finally,  as we made in \cite{BCT-SIMA}, we aim at giving a more rigorous justification (compared to Remark \ref{rem-kirchname}) for choosing the vertex conditions \eqref{eq-cont_bis} and \eqref{eq-kirch_bis}, that we call ``of Kirchhoff-type',' in the self-adjoint realization of the Dirac operator on graphs. Such a discussion follows the natural connection between Schr\"odinger models and Dirac models in the nonrelativistic limit. However, in contrast to \cite{BCT-SIMA}, here the point of view is in some sense more `foundational', as it focuses on the behavior of the operator itself when $c\to+\infty$.

Such a limit shows that, up to a suitable renormalization (due to the rest energy of relativistic particles), the operator $\Dg$ introduced in Definition \ref{defi-dirac} converges to $-\lap/2m$, in the norm-resolvent sense. In other words, the nonrelativistic counterpart of $\Dg$ is the Kirchhoff Laplacian, which explains both the name given to $\Dg$ and our choice of such realization of the Dirac operator.

As a different renormalization leads to a different type of graph Laplacian in the nonrelativistic limit, before stating the last result of the paper, we also recall the following definition (see e.g., \cite{Sc-Bo}).

\begin{defi}[Homogeneous $\delta'$ Laplacian]
\label{defi-lapdp}
 The Laplacian with \emph{homogeneous} $\delta'$ \emph{vertex conditions}, denoted by $-\lapd$, is an operator with the same action of $-\lap$ and domain
 \[
 \dom(-\lapd):=\big\{u\in H^2(\G):\text{\eqref{eq-contd} and \eqref{eq-kirchd} are satisfied}\big\},
\]
with
\begin{gather}
 \label{eq-contd}  u_e'(\v)=u_f'(\v),\qquad\forall e,f\succ\v,\qquad\forall\v\in\mathrm{V}_\K,\\[.2cm]
 \label{eq-kirchd} \sum_{e\succ\v}u_e^{\pm}(\v)=0,\qquad\forall\v\in\mathrm{V}_\K,
\end{gather}
 where $u_e'(\v)$ denotes the value of the derivative at $\v$ independently of the fact that $x_e=0$ or $x_e=\ell_e$ at $\v$.
\end{defi}
In order to state the next result it is also necessary to define a matrix operator that gathers the Kirchhoff Laplacian and the homogeneous $\delta'$ Laplacian.

\begin{defi}
We denote by $-\Delta$ the operator with domain
\[
 \dom(-\Delta):=\{\psi=(\phi,\chi)^T:\phi\in\dom(-\lap),\,\chi\in\dom(-\lapd)\},
\]
with action
\begin{equation}
 \label{eq-biglap}
 -\Delta\psi:=\begin{pmatrix} -\lap & 0 \\ 0 & -\lapd \end{pmatrix}\begin{pmatrix} \phi \\ \chi \end{pmatrix},
\end{equation}
where $-\lap$ and $-\lapd$ are the operators given by Definitions \ref{defi-lapk} and \ref{defi-lapdp}.
\end{defi}

\begin{prop}[Nonrelativistic limit of $\Dg$]
 \label{prop:nonrel}
 Let $\G$ be a noncompact metric graph with a finite number of edges. Then, for every $k\in\C\backslash\R$
 \begin{equation}
  \label{eq-lim1}
  (\Dgc-mc^2-k)^{-1}\longrightarrow P^+\bigg(\frac{-\Delta}{2m} -k\bigg)^{-1},\qquad\text{as}\quad c\to+\infty,
 \end{equation}
 in the $L^\infty$-operator norm, with $P^+$ the projector on the first component of the spinor, represented by the matrix $\begin{pmatrix} 1 & 0 \\ 0 & 0 \end{pmatrix}$. Moreover,
 \begin{equation}
  \label{eq-lim2}
  (\Dgc+mc^2-k)^{-1}\longrightarrow P^-\bigg(\frac{-\Delta}{2m} -k\bigg)^{-1},\qquad\text{as}\quad c\to+\infty,
 \end{equation}
 in the same norm, with $P^-$ the projector on the second component of the spinor, represented by the matrix $\begin{pmatrix} 0 & 0 \\ 0 & 1 \end{pmatrix}$.
\end{prop}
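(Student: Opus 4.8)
The plan is to establish \eqref{eq-lim1}--\eqref{eq-lim2} as two parallel instances of the nonrelativistic (Dirac-to-Schr\"odinger) limit, worked out directly on the resolvents. Since $\G$ has finitely many edges, $\Dgc$ acts on each edge as the free one-dimensional Dirac operator $-\imath c\sigma_1\tfrac{d}{dx}+mc^2\sigma_3$ while \eqref{eq-cont_bis}--\eqref{eq-kirch_bis} couple only finitely many boundary values; hence $(\Dgc-z)^{-1}$ admits an explicit representation --- for instance via the Krein-type resolvent formula used in \cite{BCT-SIMA,BCT-p} --- as the orthogonal sum of the edgewise free Dirac resolvents plus a finite-rank term built from a $z$-dependent matrix that encodes the vertex conditions, and the resolvents of $-\lap$ and of $-\lapd$ share this structure. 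The statement then reduces to a term-by-term passage to the limit $c\to+\infty$ in these formulas. The conceptual backbone is the Feshbach--Schur reduction with respect to the spinor decomposition $\C^2=P^+\C^2\oplus P^-\C^2$, which simultaneously explains the appearance of $\tfrac{-\lap}{2m}$ (resp.\ $\tfrac{-\lapd}{2m}$) and the vanishing of the complementary block.

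Concretely, for \eqref{eq-lim1} write, in block form,
\[
\Dgc-mc^2-k=\begin{pmatrix}-k & -\imath c\,\tfrac{d}{dx}\\[.15cm] -\imath c\,\tfrac{d}{dx} & -2mc^2-k\end{pmatrix}.
\]
The lower-right block is boundedly invertible for $c$ large, its Schur complement is $-k-\tfrac{c^2}{2mc^2+k}\,\tfrac{d^2}{dx^2}$ and converges to $\tfrac{-\lap}{2m}-k$, while the off-diagonal and lower-right blocks of the resolvent are $O(1/c)$ and $O(1/c^2)$ in operator norm. The vertex conditions survive the reduction: continuity of $\phi$ passes to the limit unchanged, and inserting $\chi_e=\tfrac{-\imath c}{2mc^2+k}\,\phi_e'$ into $\sum_{e\succ\v}\chi_e^{\pm}(\v)=0$ turns it into $\sum_{e\succ\v}\tfrac{d\phi_e}{dx_e}(\v)=0$, i.e.\ into \eqref{eq-kirch} --- so the limiting operator on $P^+\C^2$ is the Kirchhoff Laplacian. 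For \eqref{eq-lim2} one instead Schur-complements the (now large) upper-left block of $\Dgc+mc^2-k=\big(\begin{smallmatrix}2mc^2-k & -\imath c\,d/dx\\[.1cm] -\imath c\,d/dx & -k\end{smallmatrix}\big)$: the surviving second component inherits $\sum_{e\succ\v}\chi_e^{\pm}(\v)=0$ directly (which is \eqref{eq-kirchd} for $u=\chi$), while continuity of $\phi$, through $\phi_e=\tfrac{\imath c}{2mc^2-k}\,\chi_e'$, descends to the continuity of the derivatives of $\chi$ across the vertices, i.e.\ to \eqref{eq-contd}; the corresponding Schur complement converges, on $P^-\C^2$, to $\tfrac{-\lapd}{2m}-k$, which gives the right-hand side of \eqref{eq-lim2}.

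To upgrade these formal limits to convergence in the operator norm I would argue from the explicit kernels. On each edge the free Dirac resolvent is an elementary $2\times2$ exponential kernel in $\sqrt{z^2-m^2c^4}/c$, and a direct expansion shows that, entrywise, it converges uniformly on $L^2(I_e,\C^2)$ to the resolvent kernel of the relevant limit operator, with an $O(1/c)$ remainder in operator norm; at the same time the finite matrix encoding \eqref{eq-cont_bis}--\eqref{eq-kirch_bis} converges to the one encoding \eqref{eq-cont}--\eqref{eq-kirch} (resp.\ \eqref{eq-contd}--\eqref{eq-kirchd}) and stays uniformly invertible for $k\in\C\setminus\R$. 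Since $\tfrac{-\lap}{2m}\otimes P^+$ and $\tfrac{-\lapd}{2m}\otimes P^-$ are self-adjoint and $k$ is non-real, every resolvent in sight is bounded by $1/|\im k|$ uniformly in $c$, which legitimises passing to the limit in the product of the (convergent) free resolvents and the (convergent, uniformly bounded) finite-rank vertex term.

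The main obstacle is precisely this quantitative step: strong convergence is not enough and cannot be promoted by compactness, because the essential spectrum of $\Dgc\mp mc^2$ fills the whole real line, so one genuinely needs uniform-in-$c$ operator-norm control of the off-diagonal and small-component blocks of $(\Dgc\mp mc^2-k)^{-1}$. Moreover, on the unbounded edges the Dirac dispersion $\sqrt{z^2-m^2c^4}/c$ behaves differently as $c\to+\infty$ than on the bounded ones, and one has to verify that this does not make the vertex-coupling matrix ill-conditioned in the limit; one must also keep careful track of the orientation and sign conventions of Definition \ref{defi-dirac} so that the right-hand sides of \eqref{eq-lim1}--\eqref{eq-lim2} come out exactly as stated. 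A convenient way to organise the bookkeeping is to conjugate $\Dgc$ edgewise by the unitary bringing it to a reference form in which the free resolvent is diagonal in the exponential basis, reducing the argument to the scalar Dirac-to-Schr\"odinger limit together with a finite-dimensional linear-algebra computation at the vertices of the compact core.
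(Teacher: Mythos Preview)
Your approach is a valid strategy and would work if carried out, but it is genuinely different from the paper's. You propose to realise $(\Dgc\mp mc^2-k)^{-1}$ through a Krein/boundary-triple formula (edgewise free Dirac resolvents plus a finite-rank vertex correction), then to run a Feshbach--Schur reduction in the spinor components and to control the $c\to\infty$ limit by expanding the explicit exponential kernels and the vertex-coupling matrix. The paper bypasses all explicit kernels and all Krein machinery by a single algebraic observation: since $\Dgc^2=-c^2\Delta+m^2c^4\,\I$ with $-\Delta:=(-\lap\otimes P^+)\oplus(-\lapd\otimes P^-)$, one has
\[
(\Dgc-mc^2-k)(\Dgc+mc^2+k)=-c^2\Delta-2mc^2k-k^2,
\]
which after dividing by $2mc^2$ gives the closed formula
\[
(\Dgc-mc^2-k)^{-1}=\Big(P^{+}+\tfrac{\Dm+k}{2mc^2}\Big)\Big(\I-\big(\tfrac{-\Delta}{2m}-k\big)^{-1}\tfrac{k^2}{2mc^2}\Big)^{-1}\big(\tfrac{-\Delta}{2m}-k\big)^{-1}.
\]
Here the limit operator $(\tfrac{-\Delta}{2m}-k)^{-1}$ is already isolated as a factor, and the remaining factors are visibly $\I+O(c^{-1})$ in operator norm; the ``obstacle'' you flag (promoting strong to norm convergence without compactness) simply does not arise. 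The trade-off: your route is more hands-on and makes the mechanism by which \eqref{eq-cont_bis}--\eqref{eq-kirch_bis} degenerate to \eqref{eq-cont}--\eqref{eq-kirch} (resp.\ \eqref{eq-contd}--\eqref{eq-kirchd}) transparent, at the price of the kernel and matrix bookkeeping you describe; the paper's route is shorter and model-independent (no edge-by-edge computation, no vertex matrix to invert), but it hinges on knowing in advance that $\Dgc^2$ equals the ``big'' Laplacian $-\Delta$ with exactly the right mixed vertex conditions. A minor inaccuracy in your write-up: the essential spectrum of $\Dgc\mp mc^2$ is $(-\infty,-2mc^2]\cup[0,+\infty)$ (resp.\ its reflection), not the whole real line; this does not affect your argument.
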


As we mentioned before, the two different renormalizations of $\Dg$ given by \eqref{eq-lim1} and \eqref{eq-lim2} (respectively) yield two different limits. In the first case, one obtains the expected Kirchhoff Laplacian, while in the second case one obtains a Laplacian with the so-called homogeneous $\delta'$ vertex condition (see, e.g., \cite{BD-LMP}).

On the other hand, it is immediate by \cite[Theorem VIII.21]{RS-I} that Proposition \ref{prop:nonrel} entails the strong $L^2$ convergence of the propagators, i.e.
\begin{gather*}
 e^{-\imath t(\Dgc-mc^2)}\longrightarrow e^{\imath t \big(P^+\frac{-\Delta}{2m}\big)},\qquad\forall t\in\R,\\
 e^{-\imath t(\Dgc+mc^2)}\longrightarrow e^{\imath t \big(P^-\frac{-\Delta}{2m}\big)},\qquad\forall t\in\R.
\end{gather*}


\subsection{Organization of the paper}

The paper is organized as follows:

\begin{itemize}
 \item[(i)] in Section \ref{sec:lwpandgwp} we prove Theorem \ref{thm:lwp};
 \item[(ii)] in Section \ref{sec:standingwaves} we give the proof of Theorem \ref{thm:standingwaves};
 \item[(iii)] the Appendix is devoted to the proof of Proposition \ref{prop:nonrel}.
\end{itemize}


\section{Proof of Theorem  \ref{thm:lwp}}
\label{sec:lwpandgwp}
In this section we prove Theorem \ref{thm:lwp}.

Recall that, using the anti-commutation properties of the Pauli matrices,
\[
 \int_\G|\Dg\psi|^2\dx=\int_{\G}|\psi'|^2\dx+m^2\int_\G|\psi|^2\dx,\qquad\forall\psi\in\dom(\Dg),
\]
so that
\[
 \|\psi\|_{H^1(\G,\C^2)}\sim\|\Dg\psi\|_{L^2(\G,\C^2)}\sim\|\psi\|_{\Dg},\qquad\forall\psi\in\dom(\Dg).
\]
We also note that, in the above formulas as well as in the next two sections, we will always assume $c=1$ since such parameter does not play any role in the proofs.

In addition, we will often use (with a little abuse) both $\Psi(t,\cdot\,)$ and $\Psi(t)$ to denote the function of space that one obtains freezing the time variable. On the other hand, the notation ``$x$'' in the following has to be meant as a generic point of the graph, without a precise specification of the edge one refers too. 


\subsection{Local well-posedness in the operator domain and blow-up alternative}

In order to prove Theorem \ref{thm:lwp} we first show the following lemma, which states the equivalence between \eqref{eq:cauchy2} and the associated \emph{Duhamel formula}. Note that in the following, we will always denote $e^{-\imath t\Dg}$ by $\cU(t)$.

\begin{lemma}[Duhamel formula]
\label{lem:duhamel}
 Let $\G$ be a noncompact metric graph with a finite number of edges and let $p>2$. For every $T>0$, a function $\Psi\in C([0,T],\dom(\Dg))\cap C^1([0,T],L^2(\G,\C^2))$ is a solution of \eqref{eq:cauchy2} if and only if it solves 
 \begin{equation}
  \label{eq:duhamel}
  \Psi(t,x)=\cU(t)\Psi_0[x]+\imath\int^t_0 \cU(t-\tau)\big(|\Psi(\tau)|^{p-2}\Psi(\tau)\big)[x]\dtau
 \end{equation}
 in $L^2(\G,\C^2)$, for every $t\in[0,T]$.
\end{lemma}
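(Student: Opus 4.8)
The plan is to prove the two implications separately, relying on the fact that $\cU(t) = e^{-\imath t\Dg}$ is a strongly continuous one-parameter unitary group on $L^2(\G,\C^2)$ whose generator is $-\imath\Dg$ with domain $\dom(\Dg)$, and on the $H^1$-algebra property: since $p>2$ and $H^1(I_e)\hookrightarrow L^\infty(I_e)$ on each edge, the Nemytskii map $\psi\mapsto |\psi|^{p-2}\psi$ sends $H^1(\G,\C^2)$ continuously (indeed locally Lipschitz) into $H^1(\G,\C^2)\equiv\dom(\Dg)$ (equipped with an equivalent norm, as recalled just above). Throughout, write $F(\psi):=|\psi|^{p-2}\psi$ for brevity.

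First I would show that a solution $\Psi\in C([0,T],\dom(\Dg))\cap C^1([0,T],L^2(\G,\C^2))$ of \eqref{eq:cauchy2} satisfies \eqref{eq:duhamel}. Fix $t\in[0,T]$ and consider the $L^2$-valued map $s\mapsto \cU(t-s)\Psi(s)$ on $[0,t]$. Since $\Psi(s)\in\dom(\Dg)$ for every $s$ and $\cU(t-s)$ leaves $\dom(\Dg)$ invariant, this map is differentiable in $L^2(\G,\C^2)$ with derivative
\[
 \frac{d}{ds}\,\cU(t-s)\Psi(s)=\imath\Dg\,\cU(t-s)\Psi(s)+\cU(t-s)\Psi'(s)=\cU(t-s)\big(\imath\Dg\Psi(s)+\Psi'(s)\big),
\]
and by the equation $\imath\Psi'(s)=\Dg\Psi(s)-F(\Psi(s))$, i.e. $\Psi'(s)=-\imath\Dg\Psi(s)+\imath F(\Psi(s))$, the bracket equals $\imath F(\Psi(s))$. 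Here one must check that $s\mapsto\Psi'(s)$ is continuous into $L^2$ (given) and that $s\mapsto F(\Psi(s))$ is continuous into $L^2$ (which follows from $\Psi\in C([0,T],\dom(\Dg))$ and continuity of $F$ on $H^1$), so the integrand $s\mapsto\cU(t-s)\,\imath F(\Psi(s))$ is continuous into $L^2$ and the fundamental theorem of calculus for $L^2$-valued functions applies: integrating from $0$ to $t$ gives $\Psi(t)-\cU(t)\Psi(0)=\imath\int_0^t\cU(t-s)F(\Psi(s))\,ds$, which is \eqref{eq:duhamel}.

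Conversely, suppose $\Psi\in C([0,T],\dom(\Dg))\cap C^1([0,T],L^2(\G,\C^2))$ solves \eqref{eq:duhamel}. Since $\Psi\in C([0,T],\dom(\Dg))$, the map $s\mapsto F(\Psi(s))$ lies in $C([0,T],\dom(\Dg))$ by the Nemytskii regularity above; in particular it is in $C([0,T],L^2(\G,\C^2))$, so the classical theory of the inhomogeneous abstract Cauchy problem (see, e.g., the argument in Reed--Simon or Pazy) guarantees that the right-hand side of \eqref{eq:duhamel} defines a function that is $C^1$ into $L^2$ and takes values in $\dom(\Dg)$, with
\[
 \imath\frac{d}{dt}\Big(\cU(t)\Psi_0+\imath\int_0^t\cU(t-s)F(\Psi(s))\,ds\Big)=\Dg\Psi(t)-F(\Psi(t)),
\]
and evaluating \eqref{eq:duhamel} at $t=0$ gives $\Psi(0)=\Psi_0$. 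The key point making the differentiation legitimate is precisely that the forcing term $F(\Psi(\cdot))$ is continuous into $\dom(\Dg)$, not merely into $L^2$; this is where the restriction $p>2$ and the one-dimensionality of the edges enter, via $H^1\hookrightarrow L^\infty$. Hence $\Psi$ solves \eqref{eq:cauchy2}.

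The main obstacle is the careful justification of the differentiation under the integral sign in the ``$\Leftarrow$'' direction and of the term-by-term differentiability of $s\mapsto\cU(t-s)\Psi(s)$ in the ``$\Rightarrow$'' direction: one has to combine the strong continuity (but \emph{not} norm continuity) of the unitary group with the regularity of the nonlinear term, and verify that all the intermediate maps are continuous into the right spaces. Everything else is bookkeeping: the equivalence of $\|\cdot\|_{\Dg}$, $\|\cdot\|_{H^1}$ and $\|\Dg\cdot\|_{L^2}$ (recalled above) lets us treat $\dom(\Dg)$ as $H^1(\G,\C^2)$ throughout, and the Banach-algebra-type estimate for $F$ on $H^1(\G,\C^2)$, which will anyway be needed for the fixed-point construction of the solution in part (i) of Theorem \ref{thm:lwp}, is exactly what is required here as well.
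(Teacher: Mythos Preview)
Your argument for the implication ``\eqref{eq:cauchy2} $\Rightarrow$ \eqref{eq:duhamel}'' via differentiation of $s\mapsto\cU(t-s)\Psi(s)$ is fine and is essentially the reverse of the paper's computation. The problem is in the converse.

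You write that the Nemytskii map $\psi\mapsto|\psi|^{p-2}\psi$ sends $H^1(\G,\C^2)\equiv\dom(\Dg)$ into itself, and then invoke the classical theory for inhomogeneous abstract Cauchy problems with a forcing term continuous into $\dom(\Dg)$. But $\dom(\Dg)$ is \emph{not} all of $H^1(\G,\C^2)$: it is the closed subspace cut out by the vertex conditions \eqref{eq-cont_bis}--\eqref{eq-kirch_bis}. The norm equivalence $\|\psi\|_{H^1}\sim\|\Dg\psi\|_{L^2}$ holds only for $\psi\in\dom(\Dg)$ and says nothing about equality of the spaces. And the nonlinearity does \emph{not} preserve those vertex conditions: if $\psi=(\phi,\chi)^T\in\dom(\Dg)$ then $F(\psi)=\big(|\psi|^{p-2}\phi,\,|\psi|^{p-2}\chi\big)^T$, and since at a vertex $\v$ the values $|\psi_e(\v)|=\big(|\phi_e(\v)|^2+|\chi_e(\v)|^2\big)^{1/2}$ generally differ across the incident edges (only $\phi_e(\v)$ agree, and only $\sum_e\chi_e^\pm(\v)$ vanishes), neither the continuity of the first component nor the Kirchhoff-type condition on the second survives. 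So $F(\Psi(\,\cdot\,))\notin C([0,T],\dom(\Dg))$ in general, and the shortcut through the ``classical theory with domain-valued forcing'' is not available.

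The paper avoids exactly this obstruction: for ``\eqref{eq:duhamel} $\Rightarrow$ \eqref{eq:cauchy2}'' it never asserts that $|\Psi|^{p-2}\Psi\in\dom(\Dg)$. Instead it computes $\partial_t\cN(\Psi)$ by the change of variable $y=t-\tau$ together with $\Psi\in C^1([0,T],L^2)$, and then identifies $\Dg\cN(\Psi)$ by taking the limit of the incremental quotient $h^{-1}(\cU(h)-I)\cN(\Psi)$ directly, showing it equals $\imath\big(\partial_t\cN(\Psi)-|\Psi|^{p-2}\Psi\big)$. This yields the PDE without ever needing the forcing to lie in the operator domain.
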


\begin{proof}
 It is sufficient to prove that \eqref{eq:duhamel} entails \eqref{eq:cauchy2}. The converse can be easily checked simply retracing backward the following computations.
 
 First we show that $\Psi(0)=\Psi_0$. Since $\cU(t)$ is a strongly continuous group on $L^2(\G,\C^2)$ it is left to prove that the integral term in \eqref{eq:duhamel} vanishes as $t\to0$. To this aim it suffices to see that, for every $T>0$,
 \begin{equation}
  \label{eq:nonlin1}
  \cU(T-\cdot\,)\big(|\Psi(\,\cdot\,)|^{p-2}\Psi(\,\cdot\,)\big)[\,\cdot\,]\in L^1([0,T],L^2(\G,\C^2))
 \end{equation}
 and exploit the absolute continuity of the Lebesgue integral. However, \eqref{eq:nonlin1} is a direct consequence of the fact that $\cU(t)$ is an isometry of $L^2(\G,\C^2)$, which yields
 \begin{multline*}
  \int_0^T\big\|\cU(T-\tau)(|\Psi(\tau)|^{p-2}\Psi(\tau))\big\|_{L^2(\G,\C^2)}\dtau=\int_0^T\big\||\Psi(\tau)|^{p-2}\Psi(\tau)\big\|_{L^2(\G,\C^2)}\dtau\\[.1cm]
  \leq T\|\Psi\|_{C([0,T],H^1(\G,\C^2))}^{p-1}<+\infty
 \end{multline*}
were we used $\Psi\in C([0,T],\dom(\Dg))$, $\dom(\Dg)\hookrightarrow H^1(\G,\C^2)$ and the Sobolev embedding.

 On the other hand, one has to prove that the function $\Psi$ that satisfies \eqref{eq:duhamel} also fulfills \eqref{eq-NLDtime} in $L^2(\G,\C^2)$. Let us, then, compute $\imath\f{\partial \Psi}{\partial t}$ using \eqref{eq:duhamel}. Fix $t\in[0,T]$. From the properties of the free propagator one immediately sees that
 \begin{equation}
  \label{eq:der1}
  \imath\f{\partial\,\cU(\,\cdot\,)\Psi_0}{\partial t}=\Dg\,\cU(\,\cdot\,)\Psi_0\qquad\text{in}\quad L^2(\G,\C^2),\quad\forall t>0.
 \end{equation}
 Now, let us define the map $\cN$ as
 \begin{multline}
  \label{eq:non_map}
  \cN(\Psi)[t,x]:=\int^t_0 \cU(t-\tau)\big(|\Psi(\tau)|^{p-2}\Psi(\tau)\big)[x]\dtau\\[.1cm]
  =\int^t_0 \cU(y)\big(|\Psi(t-y)|^{p-2}\Psi(t-y)\big)[x]\dy.
 \end{multline}
 Using that $\Psi\in C^1([0,T],L^2(\G))$, one can compute
 \begin{equation}
 \label{eq:non_der}
 \f{\partial\cN(\Psi)}{\partial t}[t,x]=\cU(t)\big(|\Psi_0|^{p-2}\Psi_0\big)[x]+\int^t_0 \cU(t-\tau)\big(\partial_\tau|\Psi(\tau)|^{p-2}\Psi(\tau)\big)[x]\dtau
\end{equation}
and, arguing as before, obtain that also $\cN(\Psi)\in C^1([0,T],L^2(\G))$. In addition, one can check that for $h$ small
\begin{multline*}
h^{-1}(\cU(h)-I)\big(\cN(\Psi)[t,\cdot\,]\big)[x]\\[.2cm]
=\frac{1}{h}\int^t_0\cU(t+h-\tau)\big(|\Psi(\tau)|^{p-2}\Psi(\tau)\big)[x]\dtau-\frac{1}{h}\int^t_0\cU(t-\tau)\big(|\Psi(\tau)|^{p-2}\Psi(\tau)\big)[x]\dtau\\[.2cm]
=\frac{\cN(\Psi)[t+h,x]-\cN(\Psi)[t,x]}{h}-\frac{1}{h}\int_t^{t+h}\cU(t+h-\tau)\big(|\Psi(\tau)|^{p-2}\Psi(\tau)\big)[x]\dtau
\end{multline*}
Using the Mean Value Theorem, \eqref{eq:nonlin1} and the properties of $\cU(t)$, if one takes the limit in the $L^2(\G,\C^2)$ as $h\to0$ of the above formula, then there results
\begin{equation}
 \label{eq:diff_rep}
 -\imath\Dg\cN(\Psi):=\lim_{h\to0}h^{-1}(\cU(h)-I)\big(\cN(\Psi)\big)=\f{\partial\cN(\Psi)}{\partial t}-|\Psi\vert^{p-2}\Psi.
\end{equation}
Finally, combining this with \eqref{eq:duhamel} and \eqref{eq:der1}, one gets
\[
 \imath\f{\partial \Psi}{\partial t}=\Dg\,\cU(\,\cdot\,)\Psi_0+\imath\Dg\cN(\Psi)-|\Psi|^{p-2}\Psi,
\]
that is \eqref{eq-NLDtime}, thus concluding the proof.
\end{proof}

The second preliminary step required for the proof of the local well posedness is the following regularity lemma.

\begin{lemma}[Regularity enhancement]
\label{lem:reg}
 Let $\G$ be a noncompact metric graph with a finite number of edges and let $p>2$. For every $T>0$, if $\Psi\in L^\infty([0,T],\dom(\Dg))\cap W^{1,\infty}([0,T],L^2(\G,\C^2))$ is a solution of \eqref{eq:duhamel} in $L^2(\G,\C^2)$ for almost every $t\in[0,T]$, then $\Psi\in C([0,T],\dom(\Dg))\cap C^1([0,T],L^2(\G,\C^2))$.
\end{lemma}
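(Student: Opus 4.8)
The plan is to upgrade the a priori regularity of $\Psi$ by reading \eqref{eq:duhamel} as the mild formulation of the \emph{linear} inhomogeneous Cauchy problem $\partial_t\Psi=-\imath\Dg\Psi+\imath F$ with \emph{given} source $F(t):=|\Psi(t)|^{p-2}\Psi(t)$, and then to invoke the classical regularity theory for abstract inhomogeneous evolution equations. Preliminarily, note that $\Psi\in W^{1,\infty}([0,T],L^2(\G,\C^2))$ already yields that $\Psi$ is Lipschitz, hence continuous, as a map $[0,T]\to L^2(\G,\C^2)$; combining this with the uniform bound coming from $\Psi\in L^\infty([0,T],\dom(\Dg))$ and a standard weak--compactness argument based on the closedness of $\Dg$, one sees that in fact $\Psi(t)\in\dom(\Dg)$ for \emph{every} $t\in[0,T]$, with $\sup_{[0,T]}\|\Psi(t)\|_{\Dg}<\infty$. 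Thus the only regularity to be gained is the continuity of $\Psi$ into $\dom(\Dg)$ (equivalently, into $H^1(\G,\C^2)$) and its $C^1$-regularity into $L^2(\G,\C^2)$.

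The first real step is to control the source $F$. Since $\|\cdot\|_{\Dg}\sim\|\cdot\|_{H^1(\G,\C^2)}$ and each edge is an interval or a half-line, the one--dimensional Sobolev embedding $H^1(\G,\C^2)\hookrightarrow L^\infty(\G,\C^2)$ gives a uniform bound $R:=\sup_{t\in[0,T]}\|\Psi(t)\|_{L^\infty(\G,\C^2)}<\infty$. Using then the elementary inequality $\big||z|^{p-2}z-|w|^{p-2}w\big|\leq C_p\big(|z|^{p-2}+|w|^{p-2}\big)|z-w|$, valid for $p\geq2$, together with the Lipschitz continuity of $\Psi$ into $L^2(\G,\C^2)$, one obtains
\[
 \|F(t)-F(s)\|_{L^2(\G,\C^2)}\leq 2C_p\,R^{p-2}\,\|\Psi(t)-\Psi(s)\|_{L^2(\G,\C^2)}\leq C\,|t-s|,\qquad\forall\,s,t\in[0,T],
\]
so that $F$ is Lipschitz from $[0,T]$ into $L^2(\G,\C^2)$; in particular $F\in C([0,T],L^2(\G,\C^2))\cap W^{1,1}([0,T],L^2(\G,\C^2))$.

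Now I would conclude by semigroup theory. Since $\Dg$ is self-adjoint, $-\imath\Dg$ generates the strongly continuous unitary group $\cU(t)$; moreover, letting $t\to0^+$ in \eqref{eq:duhamel} (the integral term vanishing because $F$ is bounded in $L^2(\G,\C^2)$), the Cauchy datum satisfies $\Psi_0=\Psi(0)\in\dom(\Dg)$. Because $\Psi_0\in\dom(\Dg)$ and $F\in W^{1,1}([0,T],L^2(\G,\C^2))$, the standard regularity result for inhomogeneous abstract Cauchy problems guarantees that the function
\[
 t\longmapsto\cU(t)\Psi_0+\imath\int_0^t\cU(t-\tau)F(\tau)\dtau
\]
is the (unique) classical solution of $\partial_t\Psi=-\imath\Dg\Psi+\imath F$ on $[0,T]$, and hence belongs to $C([0,T],\dom(\Dg))\cap C^1([0,T],L^2(\G,\C^2))$. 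By \eqref{eq:duhamel} this function coincides with $\Psi$ for a.e.\ $t\in[0,T]$; both being continuous as $L^2(\G,\C^2)$-valued maps, they coincide for every $t\in[0,T]$, and therefore $\Psi$ has the asserted regularity.

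The main obstacle is the estimate on the source in the second paragraph: since the a priori information places $\Psi$ in $\dom(\Dg)$ only in an $L^\infty$-in-time sense, one cannot differentiate $F$ directly at the $H^1$-level, and the remedy is to exploit the Sobolev embedding to get a pointwise-in-space bound and then to work entirely at the level of $L^2(\G,\C^2)$, where the Lipschitz-in-time control on $\Psi$ is available. Once $F$ is seen to be $W^{1,1}$ in time with values in $L^2(\G,\C^2)$, the remainder is a routine application of the theory of $C_0$-semigroups, the only further (elementary) point being the verification, recalled above, that the Cauchy datum lies in $\dom(\Dg)$.
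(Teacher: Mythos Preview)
Your proof is correct and rests on the same key observation as the paper's: the source $F(t)=|\Psi(t)|^{p-2}\Psi(t)$ is Lipschitz (hence $W^{1,1}$) in time with values in $L^2(\G,\C^2)$, thanks to the uniform $L^\infty$ bound on $\Psi$ coming from $\dom(\Dg)\hookrightarrow H^1(\G,\C^2)\hookrightarrow L^\infty(\G,\C^2)$ together with the $W^{1,\infty}$-in-time regularity of $\Psi$. The difference is in how this is exploited. The paper proceeds hands-on: it differentiates the Duhamel integral $\cN(\Psi)$ directly via formula \eqref{eq:non_der} to obtain $\cN(\Psi)\in C^1([0,T],L^2(\G,\C^2))$, and then reads off $\Dg\cN(\Psi)\in C([0,T],L^2(\G,\C^2))$ from the identity \eqref{eq:diff_rep} (both formulas having been derived in the proof of Lemma~\ref{lem:duhamel}). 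You instead invoke the abstract regularity theorem for inhomogeneous $C_0$-semigroup evolutions with $W^{1,1}$ source and domain initial datum, which subsumes those computations. Your route is slightly more conceptual and isolates cleanly the two hypotheses ($\Psi_0\in\dom(\Dg)$ and $F\in W^{1,1}$) that drive the conclusion; the paper's route is self-contained and recycles the machinery already set up for the preceding lemma.
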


\begin{proof}
 Preliminarily, we note that the function $\cU(\,\cdot\,)\Psi_0$ belongs to $C([0,T],\dom(\Dg))$ $\cap C^1([0,T],L^2(\G,\C^2))$ from the Stone's Theorem. Then, owing to \eqref{eq:duhamel}, it is sufficient to prove that $\cN(\Psi)\in C([0,T],\dom(\Dg))\cap C^1([0,T],L^2(\G,\C^2))$, with $\cN(\Psi)$ defined by \eqref{eq:nonlin1}.
 
 First, it is clear that \eqref{eq:nonlin1} holds also for $L^\infty([0,T],\dom(\Dg))$-functions and this implies $\Psi\in C([0,T],L^2(\G,\C^2))$ and, via \eqref{eq:duhamel}, $\cN(\Psi)\in C([0,T],L^2(\G,\C^2))$. On the other hand, one can check that \eqref{eq:non_der} is valid even if $\Psi$ only belongs to $W^{1,\infty}([0,T],L^2(\G,\C^2))$. As a consequence, using again the properties of $\cU(t)$ and the absolute continuity of the Lebesgue integral, one obtains that actually $\cN(\Psi)\in C^1([0,T],L^2(\G,\C^2))$.
 
 Finally, it is left to show that $\Dg\Psi\in C([0,T],L^2(\G,\C^2))$. To this aim, we can use again the representation of $\Dg$ given by \eqref{eq:diff_rep} and then, since $\Psi\in L^\infty([0,T],\dom(\Dg))\cap C([0,T],L^2(\G,\C^2))$ and $\f{\partial\cN(\Psi)}{\partial t}\in C([0,T],L^2(\G,\C^2))$, the claim is proved.
\end{proof}

Now, we have all the ingredients for the proof of the local well-posedness of \eqref{eq:cauchy2} in the operator domain. We only recall that, for every $z$ in the resolvent set of $\Dg$, the resolvent operator $(\Dg-z)^{-1}$ maps $L^2(\G,\C^2)$ onto $\dom(\Dg)$.

\begin{proof}[Proof of Theorem \ref{thm:lwp}: point (i).]
In view of Lemmas \ref{lem:duhamel} and \ref{lem:reg}, in order to prove point (i) of Theorem \ref{thm:lwp} it suffices to show that \eqref{eq:duhamel} admits a unique solution in 
\be
\label{eq:Xspace} 
X_T:=L^\infty([0,T],\dom(\Dg))\cap W^{1,\infty}([0,T],L^2(\G,\C^2))
\ee
for $T>0$ small enough. In particular, in the following we prove that the map $\M: X_T\longrightarrow X_T$, defined by
\be
\label{eq:map_contr}
\M(\Psi)[t,x]:=\cU(t)\Psi_0[x]+\imath\int^t_0 \cU(t-\tau)\big(|\Psi(\tau)|^{p-2}\Psi(\tau)\big)[x]\dtau
\ee
is a contraction if restricted to 
\be
\label{eq:ball}
 B_R=\{\Psi\in X_T:\Psi(0)=\Psi_0\:\text{and}\:\|\Psi\|_{X_T}\leq R\},
\ee
for some suitable $T,\,R>0$, where $\Psi(0)=\Psi_0$ has to be meant as an equality in $L^2(\G,\C^2)$, $\Psi_0\in\dom(\Dg)$ and
\begin{align}
\label{eq:Xnorm}
\|\Psi\|_{X_T} & :=\esssup_{t\in[0,T]}\|\Dg\Psi(t)\|_{L^2(\G,\C^2)}+\esssup_{t\in[0,T]}\|\partial_t\Psi(t)\|_{L^2(\G,\C^2)}\\[.2cm]
           & \sim \|\Psi\|_{L^\infty([0,T],\dom(\Dg))}+\|\Psi\|_{W^{1,\infty}([0,T],L^2(\G,\C^2))}.\nonumber
\end{align}
Since the map $\M$ can be checked to be well defined from $X_T$ into itself (arguing as in the proof of Lemma \ref{lem:reg}) it is left to prove that
\begin{itemize}
 \item[(a)] the map preserves the set $B_R$,
 \item[(b)] the map is contractive,
\end{itemize}
provided $T$ is small enough.

\smallskip
\emph{Item (a): $\M(B_R)\subset B_R$.} Fix $R>0,\,T>0$ and $t\in[0,T]$. Preliminarily, note that, arguing again as in the proof of Lemma \ref{lem:reg} one can easily see that $\M(\Psi)[0,\,\cdot\,]=\psi_0$. In addition, as $\Psi_0\in\dom(\Dg)$, using \eqref{eq:der1}, the commutation of $\Dg$ and $\cU(t)$ on $\dom(\Dg)$, and the unitarity of $\cU(t)$ on $L^2(\G,\C^2)$, there results
\be
\label{eq:propnorm}
\|\cU(\,\cdot\,)\Psi_0\|_{X_T}\leq2\|\Psi_0\|_{\Dg}.
\ee
Furthermore, in order to estimate the integral term of \eqref{eq:map_contr}, we argue as follows. Exploiting the regularity of $\Psi\in B_R$, (again) the propeties of the propagator $\cU(t)$ and the fact that $\Dg^{-1}$ maps $L^2(\G,\C^2)$ onto $\dom(\Dg)$, we can integrate by parts in \eqref{eq:non_map} thus obtaining
\begin{align*}
\imath\cN(\Psi)[t,x]= & \, \imath\int_0^t\cU(t-\tau)\Dg\Dg^{-1}\big(|\Psi(\tau)|^{p-2}\Psi(\tau)\big)[x]\dtau\\[.2cm]
                    = & \, -\int_0^t\f{\partial}{\partial t}\,\cU(t-\tau)\Dg^{-1}\big(|\Psi(\tau)|^{p-2}\Psi(\tau)\big)[x]\dtau\\[.2cm]
                    = & \, \Dg^{-1}\big(|\Psi(t)|^{p-2}\Psi(t)\big)[x]-\cU(t)\Dg^{-1}\big(|\Psi_0|^{p-2}\Psi_0\big)[x]+\\[.2cm]
                      & \, -\int_0^t\cU(t-\tau)\Dg^{-1}\big(\partial_\tau|\Psi(\tau)|^{p-2}\Psi(\tau)\big)[x]\dtau\\[.2cm]
                    = & \, \Dg^{-1}\big(|\Psi(t)|^{p-2}\Psi(t)\big)[x]-\cU(t)\Dg^{-1}\big(|\Psi_0|^{p-2}\Psi_0\big)[x]+\\[.2cm]
                      & \, -\int_0^t\cU(t-\tau)\Dg^{-1}\big(|\Psi(\tau)|^{p-2}\partial_\tau\Psi(\tau)\big)[x]\dtau+\\[.2cm]
                      & \, -(p-2)\int_0^t\cU(t-\tau)\Dg^{-1}\big(|\Psi(\tau)|^{p-4}\Psi(\tau)\re\{\langle\partial_\tau\Psi(\tau),\Psi(\tau)\rangle_{\C^2}\}\big)[x]\dtau.
\end{align*}
Now, easy computations yield
\begin{align*}
\imath\f{\partial}{\partial t}\cN(\Psi)[t,x]= & \, -\imath\cU(t)\big(|\Psi_0|^{p-2}\Psi_0\big)[x]-\imath\int_0^t\cU(t-\tau)\big(|\Psi(\tau)|^{p-2}\partial_\tau\Psi(\tau)\big)[x]\dtau+\\[.2cm]
                                              & \, -\imath(p-2)\int_0^t\cU(t-\tau)\big(|\Psi(\tau)|^{p-4}\Psi(\tau)\re\{\langle\partial_\tau\Psi(\tau),\Psi(\tau)\rangle_{\C^2}\}\big)[x]\dtau\\[.2cm]
                                            =: & \, A_1(t,x)+A_2(t,x)+A_3(t,x)
\end{align*}
and
\[
 \imath\Dg\big(\cN(\Psi)[t,\cdot\,]\big)[x]= \underbrace{|\Psi(t,x)|^{p-2}\Psi(t,x)}_{=:A_4(t,x)}+A_1(t,x)+A_2(t,x)+A_3(t,x),
\]
and hence, in order to conclude, it suffices to estimate $A_1,\dots,A_4$ in $L^\infty([0,T],L^2(\G,\C^2))$. Arguing as in \eqref{eq:propnorm}, one immediately sees that
\[
 \|A_1\|_{L^\infty([0,T],L^2(\G,\C^2))}\leq\|\Psi_0\|_{\Dg}^{p-1}.
\]
On the other hand
\begin{align*}
 \|A_2(t)\|_{L^2(\G,\C^2)}^2+\|A_3(t)\|_{L^2(\G,\C^2)}^2 & \leq C_p\int_0^t\big\||\Psi(\tau)|                ^{p-2}|\partial_\tau\Psi(\tau)|\big\|_{L^2(\G,\C^2)}^2\dtau\\[.2cm]
                                                         & \leq C_p\int_0^t\|\Psi(\tau)\|_{\Dg}^{2p-4}\|\partial_\tau\Psi(\tau)\|_{L^2(\G,\C^2)}^2\dtau,
\end{align*}
so that
\[
 \|A_2\|_{L^\infty([0,T],L^2(\G,\C^2))}+\|A_3\|_{L^\infty([0,T],L^2(\G,\C^2))}\leq C_p T \|\Psi\|_{X_T}^{p-1}.
\]
Moreover, the previous computations also entail that $|\Psi|^{p-2}\Psi\in W^{1,1}([0,T],L^2(\G,\C^2))$ and thus
\[
|\Psi(t,x)|^{p-2}\Psi(t,x)=|\Psi_0(x)|^{p-2}\Psi_0(x)+\int^t_0\partial_\tau|\Psi(\tau,x)|^{p-2}\Psi(\tau,x)\dtau.
\]
Therefore, arguing as before one finds that
\[
 \|A_4\|_{L^\infty([0,T],L^2(\G,\C^2))}\leq \|\Psi_0\|_{\Dg}^{p-1}+C_p T \|\Psi\|_{X_T}^{p-1}
\]
and then, summing up,
\[
\|\M(\Psi)\|_{X_T}\leq 2\|\Psi_0\|_{\Dg}+3\|\Psi_0\|^{p-1}_{\Dg}+C_pT\|\Psi\|^{p-1}_{X_T}.
\]
As a consequence, letting
\[
R:=2\,(2\|\Psi_0\|_{\Dg}+3\|\Psi_0\|^{p-1}_{\Dg}),
\]
when $T$ is sufficiently small we have that $\|\M(\Psi)\|_{X_T}\leq R$, which proves the claim.

\smallskip
\emph{Item (b): $\M$ is contractive on $B_R$.} Following a classical strategy by Kato \cite{K}, we aim at proving that $\M$ is contractive on $B_R$, with $B_R$ endowed with a weaker norm than that of $X_T$. This allows to avoid regularity requirements on the nonlinearity, so that no further prescription on $p$ is required.
\smallskip

Precisely, we consider the following norm
\be
\label{eq:wnorm}
\|\Psi\|_{B_R}:=\|\Psi\|_{L^\infty([0,T],L^2(\G,\C^2))}.
\ee
Hence, using the $C^1$ regularity of the nonlinearity (given by $p>2$) and the unitarity of $\cU(t)$ on $L^2(\G,\C^2)$, we get
\begin{align}
\label{eq:contraction}
\|\M(\Psi)-\M(\Omega)\|_{B_R}\leq & \, T\big\||\Psi|^{p-2}\Psi-|\Omega|^{p-2}\Omega \big\|_{B_R}\nonumber\\[.2cm]
                                 \leq & \, CT\big(\|\Psi\|^{p-2}_{X_T}+\|\Omega\|^{p-2}_{X_T}\big)\|\Psi-\Omega\|_{B_R}\nonumber\\[.2cm]
                                 \leq & 2R^{p-2}CT\|\Psi-\Omega\|_{B_R},\qquad\forall\,\Psi,\Omega\in B_{R}
\end{align}
and then $\M$ is a contraction on $B_R$ endowed with the norm \eqref{eq:wnorm}, for $T>0$ sufficiently small. However, in order to use the Banach-Caccioppoli Theorem one has to also check that $(B_R, \|\,\cdot\,\|_{B_R})$ is a complete metric space.

To this aim, consider a sequence $(\Psi_n)_{n}\subset B_R$ which is a Cauchy sequence with respect to the norm $\|\,\cdot\,\|_{B_R}$. Clearly $\Psi_n\to\Psi$ in $L^\infty([0,T],L^2(\G,\C^2)$. Then, it is left to prove that $\Psi\in B_R$. First, note that by the Riesz representation theorem we have the following (anti-)isomorphism of Banach spaces
\[
X_T\simeq \big(L^1([0,T],\dom(\Dg))+W^{1,1}([0,T],L^2(\G,C^2))\big)^*,
\]
and thus, since $(\Psi_n)_{n}$ is equibounded in $X_T$ by assumption, from the Banach-Alaoglu Theorem $\Psi_n$ converges (up to subsequences) to $\Psi$ in the weak-$*$ topology of $X_T$. In addition, since $\Psi_n\subset B_R$, by the weak-$*$ lower semicontinuity of the norm $\|\Psi\|_{X_T}\leq R$. Finally, if $(\Psi_n)_{n}$ is equibounded in $X_T$, then it is also equibounded in $C([0,T],L^2(\G,\C^2))$. Hence, $\big(\Psi_n(0)\big)_n$ is equibounded in $L^2(\G,\C^2)$ and, since $\Psi_n(0)=\Psi_0$ by assumption, exploiting again Banach-Alaoglu there results that $\Psi(0)=\Psi_0$, which concludes the proof.
\end{proof}

The argument to prove the so-called \emph{blow-up alternative}, i.e. the fact that a solution either is global or it blows up in a finite time, is quite classical. We report it for the sake of completeness.

\begin{proof}[Proof of Theorem \ref{thm:lwp}: point (ii).]
 Let $T_{max}$ be the supremum of the times for which point (i) of Theorem \ref{thm:lwp} holds and let
 \begin{equation}
  \label{eq:bua}
  M_{max}:=\sup_{t\in[0,T_{max})}\|\Psi(t)\|_{\Dg}<\infty.
 \end{equation}
 By definition, there exists $(t_n)_n\subset\R^+$, $t_n\to T_{max}$, such that
 \[
  \lim_n\|\Psi(t_n)\|_{\Dg}=M\leq M_{max}.
 \]
 
 Assume, now, that $T_{max}<+\infty$. First, recall that the proof of point (i) does not depend on the initial time $t_0$, but only on the fact that $\Psi(t_0)\in\dom(\Dg)$. In particular, one sees that, in view of \eqref{eq:bua}, $\|\Psi(t_0)\|_{\Dg}\leq M_{max}$ for every $t_0\in(0,T_{max})$. Hence, the existence time $\tau(t_0)$, that one obtains starting from any $t_0\in(0,T_{max})$ and arguing as in the proof of point (i) of Theorem \ref{thm:lwp}, has to satisfy
 \[
  \tau(t_0)\geq C(M_{max})>0,
 \]
 for some suitable constant $C(M_{max})$ depending only on $M_{max}$. Therefore, if one sets $t_0=t_{\hat{n}}$ with $t_{\hat{n}}>T_{max}-C(M_{max})$, then obtains that the solution of \eqref{eq:cauchy2} does exist beyond $T_{max}$, which is a contradiction.
\end{proof}


\subsection{Mass and energy conservation}

Finally, we prove the conservation of the $L^2$--norm, also called \emph{mass}, and of the energy associated with the solution of \eqref{eq:cauchy2}.

Both the proofs strongly rely on the fact that the solution $\Psi\in C([0,T],\dom(\Dg))$ $\cap C^1([0,T],L^2(\G,\C^2))$ for every $T<T_{max}$; mainly, on the fact that
\begin{equation}
 \label{eq:dom_t}
 \Psi(t)\in\dom(\Dg),\qquad\forall\,t\in[0,T_{max}).
\end{equation}

\begin{remark}
 In this section we use $\f{\partial}{\partial x_e}$ in place of $'$ to denote the derivative with respect to $x_e$ to avoid misunderstandings with the derivative with respect to $t$.
\end{remark}

\begin{proof}[Proof of Theorem \ref{thm:lwp}: point (iii).]
 Let $\Psi=(\Phi,X)^T$ be a solution of \eqref{eq:cauchy2} on $[0,T_{max})$. One can rewrite \eqref{eq-NLDtime} componentwise thus obtaining that, on each edge $e\in\E$,
 \be
 \label{eq:comptime}
 \left\{\begin{array}{l}
  \displaystyle \imath\f{\partial\Phi_e}{\partial t}=-\imath\f{\partial X_e}{\partial x_e}+m\Phi_e-|\Psi_e|^{p-2}\Phi_e\\[.3cm]
  \displaystyle \imath\f{\partial X_e}{\partial t}=-\imath\f{\partial \Phi_e}{\partial x_e}-mX_e-|\Psi_e|^{p-2}X_e
 \end{array}\right.
\ee
Multiplying by $\Phi_e^*$ the former equation and by $X_e^*$ the latter, summing up and taking the imaginary part of the resulting equation one obtains
\[
 \re\bigg\{\Phi_e^*\f{\partial\Phi_e}{\partial t}+X_e^*\f{\partial X_e}{\partial t}\bigg\}=-\re\bigg\{\Phi_e^*\f{\partial X_e}{\partial x_e}+X_e^*\f{\partial\Phi_e}{\partial x_e}\bigg\},\qquad\forall\,e\in\E.
\]
Finally, integrating on $\G$ and using \eqref{eq:dom_t} yield
\begin{align*}
 \f{d\|\Psi(t)\|_{L^2(\G,\C^2)}^2}{dt}= & \, -2\re\bigg\{\sum_{e\in\E}\int_{I_e}\bigg(\Phi_e^*(t,x_e)\f{\partial X_e(t,x_e)}{\partial x_e}\dxe+X_e^*(t,x_e)\f{\partial \Phi_e(t,x_e)}{\partial x_e}\bigg)\dxe\bigg\}\\[.2cm]
 = & \, -2\re\bigg\{\sum_{e\in\E}\int_{I_e}\f{\partial}{\partial x_e}\big(\Phi_e^*(t,x_e)X_e(t,x_e)\big)\dxe\bigg\}\\[.2cm]
 = & \, -2\re\bigg\{\sum_{e\in\E}\big[X_e(t,\ell_e)\Phi_e^*(t,\ell_e)-X_e(t,0)\Phi_e^*(t,0)\big]\bigg\}\\[.2cm]
 = & \, 2\re\bigg\{\sum_{\v\in\K}\Phi_e^*(t,\v)\sum_{e\succ\v}X_e^{\pm}(t,\v)\bigg\}=0,
\end{align*}
which concludes the proof
\end{proof}

\begin{proof}[Proof of Theorem \ref{thm:lwp}: point (iv).]
 First, note that,since $\Dg$ is self-adgoint on $\dom(\Dg)$, the kinetic part of the energy defined by \eqref{eq:energy}, namely
 \[
  \Q(\psi):=\f{1}{2}\int_\G\langle\psi,\Dg\psi\rangle\dx,
 \]
 is the diagonal of a hermitian sesquilinear form, which is continuous with respect to the graph norm. Therefore, from \eqref{eq:solution} and \eqref{eq-NLDtime}, we have that
 \begin{align*}
  \f{d E(t)}{dt}= & \, \re\bigg\{\int_\G\big\langle\partial_t\Psi(t,x),\Dg\Psi(t,x)\big\rangle\dx-\int_\G|\Psi(t,x)|^{p-2}\big\langle\partial_t\Psi(t,x),\Psi(t,x)\big\rangle\dx\bigg\}\\[.2cm]
  = & \, \re\bigg\{\int_\G\big\langle\partial_t\Psi(t,x),\Dg\Psi(t,x)-|\Psi(t,x)|^{p-2}\Psi(t,x)\big\rangle\dx\bigg\}\\[.2cm]
  = & \, \re\bigg\{\int_\G\big\langle\partial_t\Psi(t,x),\imath\partial_t\Psi(t,x)\big\rangle\dx\bigg\}=\re\big\{\imath\|\Psi(t)\|_{L^2(\G,\C^2)}^2\big\}=0,
 \end{align*}
 which concludes the proof.
\end{proof}


\section{Proof of Theorem \ref{thm:standingwaves}}
\label{sec:standingwaves}

In this section, we prove the existence of branches of bound states for the NLDE bifurcating from the trivial solution at the positive threshold of the spectrum of $\Dg$, i.e. $\om=m$ (recall that also here we set $c=1$ for the sake of simplicity). Precisely, we prove how to construct these branches, up to a proper scaling, from the bound states of the NLSE.

Unfortunately, the method holds only for infinite $N$-star graphs. Hence, in the sequel of the section we always assume that $\G$ consists of a finite bunch of half-lines, $\h_1,\dots,\h_N$, all incident at the same vertex $\v$.


\subsection{Rescaling the stationary NLDE}
\label{sec-scaling}

Recall that (setting $\psi=(\phi,\chi)^T$) the equation in \eqref{eq:stat} can be rewritten componentwise as
\begin{equation}
\left\{
\label{eq:componenti}
\begin{array}{l}
\displaystyle-\imath\chi_e'+(m-\omega)\phi_e=\big(|\phi_e|^2+|\chi_e|^2\big)^{(p-2)/2}\phi_e,\\[.3cm]
\displaystyle-\imath\phi_e'-(m+\omega)\chi_e=\big(|\phi_e|^2+|\chi_e|^2\big)^{(p-2)/2}\chi_e,
\end{array}
\right.\qquad e=1,\dots,N.
\end{equation}
The first step of the proof is, then, to show that solving \eqref{eq:componenti} in $\dom(\Dg)$ is equivalent, up to the scaling \eqref{eq:sol_scaling}, to solving \eqref{eq-systaux} in $\dom(\Dg)$.

Since \eqref{eq:sol_scaling} clearly yields that, if $(u,v)^T\in\dom(\Dg)$, then $\psi=(\phi,\chi)^T\in\dom(\Dg)$ (and viceversa), it is sufficient to prove the equivalence between \eqref{eq:componenti} and \eqref{eq-systaux}.

To this aim, fix a generic half-line $e$ and set
\be
\label{eq:rescaling}
\phi_e(x_e)=\alpha u_e(\lambda x_e),\qquad \chi_e(x_e)=\beta v_e(\lambda x_e),\qquad x_e>0,
\ee
where $\alpha,\beta,\lambda>0$ are three constants that must not depend on the halfline $e$ and that will be chosen later. Plugging \eqref{eq:rescaling} into \eqref{eq:componenti}, dividing the first equation by $\lambda\beta$ and the second one by $\beta$ and setting $y_e=\lambda x_e$, there results
\[
 \left\{
 \begin{array}{l}
  \displaystyle -\imath v_e'+\frac{\alpha(m-\omega)}{\lambda\beta}u_e=\f{\alpha^{p-1}}{\lambda\beta}\bigg(|u_e|^2+\frac{\beta^2}{\alpha^2}|v_e|^2\bigg)^{(p-2)/2}u_e\\[.6cm]
  \displaystyle -\imath \frac{\lambda\alpha}{\beta}u_e'-(m+\omega)v_e=\alpha^{p-2}\bigg(|u_e|^2+\frac{\beta^2}{\alpha^2}| v|^2\bigg)^{(p-2)/2}v_e
 \end{array}
 \right.,
\]
where here $'$ denotes the derivative with respect to $y_e$. Now, assuming $\om<m$ and setting
\be
\label{eq:parameters}
\ep:=m-\omega,\qquad\alpha=\frac{\lambda\beta}{\ep},\qquad \alpha^{p-1}=\lambda\beta,\qquad \lambda\alpha=\beta, 
\ee
one gets
\be
\label{eq:diracscaled}
 \left\{
 \begin{array}{l}
  \displaystyle -\imath v_e'+u_e=\big(|u_e|^2+\ep|v_e|^2\big)^{(p-2)/2}u_e\\[.3cm]
  \displaystyle -\imath u_e'-2mv_e+\ep v_e=\ep\big(|u_e|^2+\ep|v_e|^2)^{(p-2)/2}v_e
 \end{array}
 \right.
\ee
(where we also used the fact that $m+\omega=2m-\ep$), which is clearly equal to \eqref{eq-systaux}, but parametrized by $\ep$ in place of $\om$. Note that, while \eqref{eq:diracscaled} is meaningful for any $\ep\in\R$, the equivalence with \eqref{eq:componenti} is valid only for $\ep>0$.

\begin{remark}
 In the new parameter $\ep$, the branch point of the solutions is given by $\ep=0$. Moreover, from \eqref{eq:parameters}, one immediately sees that
 \begin{equation}
 \label{eq:par2}
  \lambda=\sqrt{\ep},\qquad\alpha=\ep^{1/(p-2)},\qquad\beta=\ep^{p/(2p-4)}.
 \end{equation}
\end{remark}

\begin{remark}
\label{rem-restriction1}
We emphasize that the previous computations holds only for infinite $N$-star graphs, since they are the unique example of scale invariant metric graph. As a consequence the equivalence proved in this section is the reason for which we must restrict the topology of the graphs for the proof of Theorem \ref{thm:standingwaves}.
\end{remark}


\subsection{Solutions of the rescaled problem}

In view of the previous section, in order to prove Theorem \ref{thm:standingwaves} it is sufficient to prove that there exists a vector $(u_\ep,v_\ep)^T\in\dom(\Dg)$, $u_\ep,\,v_\ep\neq0$, which solves \eqref{eq:diracscaled} on each half-line (at least for small $\ep$).

To this aim it is convenient to rewrite the problem as follows. First, define the map
\[
\F:\R\times X\times Y\longrightarrow L^2(\G)\times L^2(\G)=:L^2(\G,\C^2),
\]
with
\[
X=\{u\in H^{1}(\G):u\text{ satisfies \eqref{eq-cont_bis}}\}\qquad\text{and}\qquad Y=\{v\in H^{1}(\G):v\text{ satisfies \eqref{eq-kirch_bis}}\},
\]
whose action is given by
\be
\label{eq:map}
\F(\ep,u,v)_{|_e}=\F_{e}(\ep,u_e,v_e):=\begin{pmatrix}-\imath v_e'+u_e-\big(|u_e|^2+\ep|v_e|^2\big)^{(p-2)/2}u_e\\[.2cm]
-\imath u_e'-2mv_e+\ep v_e-\ep\big(|u_e|^2+\ep|v_e|^2\big)^{(p-2)/2}v_e\end{pmatrix}
\ee
for every $e=1,\dots N$. Therefore, solving the original issue is equivalent to solving
\be
\left\{
\begin{array}{l}
\label{eq:funceq}
\displaystyle (\ep,u_\ep,v_\ep)\in\R\times X\times Y\\[.2cm]
\displaystyle u_\ep,\,v_\ep\neq0\\[.2cm]
\displaystyle \F(\ep,u_\ep,v_\ep)=0.
\end{array}
\right.
\ee

\subsubsection{Solutions for $\ep=0$}
\label{sec-nls}

The first step is the investigation of the case $\ep=0$. 

We look for solutions of \eqref{eq:funceq} with $\ep=0$, belonging to $X\backslash\{0\}\times Y\backslash\{0\}$
\[
 \left\{
 \begin{array}{l}
  \displaystyle -\imath v_e'+u_e=|u_e|^{p-2}u_e\\[.3cm]
  \displaystyle -\imath u_e'-2mv_e=0
 \end{array}\right.,\qquad e=1,\dots N\,,
\]
namely, $u$ solves \eqref{eq-NLSaux} and $v_e=\frac{-\imath u_e'}{2m}$. In \cite{ACFN-EPL} it is proved that, if $N$ is odd, then there exists a unique positive solution $U_0$ to \eqref{eq-NLSaux} given by
\begin{equation}
 \label{eq-Nodd}
 U_{0,e}(x_e)=\varphi(x_e),\qquad e=1,\dots,N,
\end{equation}
while, if $N$ is even, then there exists a family $(U_\alpha)_{\alpha\geq0}$ of positive solutions given by
\begin{equation}
\label{eq-Neven}
 U_{\alpha,e}(x_e)=
 \left\{
 \begin{array}{ll}
  \displaystyle \varphi(x_e-\alpha),\qquad \text{if }e=1,\dots,N/2;\\[.2cm]
  \displaystyle \varphi(x_e+\alpha),\qquad \text{if }e=N/2+1,\dots,N,
 \end{array}
 \right.
\end{equation}
where
\begin{equation}
 \label{eq-soliton}
 \varphi(t):=c_p\sech^{\gamma_p}(\delta_{p,m}t),
\end{equation}
and
\[
 c_p:=\bigg(\f{p}{2}\bigg)^{\f{1}{p-2}},\qquad\gamma_p:=\f{2}{p-2},\qquad\delta_{p,m}:=\f{\sqrt{2m}}{\gamma_p}.
\]
Summing up, denoting by $U$ a generic real-valued solution of \eqref{eq-NLSaux}, $(u_0,v_0)^T$ with
\begin{equation}
 \label{eq:uU}
 u_{0,e}=U_e,\qquad v_{0,e}=\frac{-\imath U_e'}{2m},\qquad e=1,\dots,N,
\end{equation}
belongs to $X\backslash\{0\}\times Y\backslash\{0\}$ and satisfies $\F(0,u_0,v_0)=0$.

\begin{remark}
\label{rem-pos}
 Note that real-valued solutions of \eqref{eq-NLSaux} cannot change sign. This follows from the fact that the solution on each half-line must be the restriction of a solution on the real line and it is known that, up to translation and phase multiplication, solutions of the NLSE on the real line are positive and radially decreasing (see, e.g., \cite[Thm. 8.1.6]{CAZ}).
 \end{remark}

\begin{remark}
\label{rmk:derivdefocusing}
By the same scaling argument one could consider the case $-m<\omega<0$ in the regime $\omega\to-m$ (taking again $c=1$). In this case we choose as small parameter 
\[
\varepsilon:=m+\omega\,.
\]
Then, plugging (as before) \eqref{eq:rescaling} in \eqref{eq:componenti} but dividing the first equation by $\alpha$ and the second one by $\lambda\alpha$, there results that the roles of $\alpha,\beta$ and of $u,v$ are exchanged compared to the previous case. More precisely, one finds
\[\displaystyle
\lambda=\sqrt{\varepsilon}\,,\qquad \alpha=\varepsilon^{\frac{p}{2(p-2)}}\,,\qquad \beta=\varepsilon^{\frac{1}{p-2}}\,,
\]
and the scaled system becomes
\[
 \left\{
 \begin{array}{l}
  \displaystyle -\imath v_e'+(2m-\ep )u_e=\varepsilon\big(\ep|u_e|^2+|v_e|^2\big)^{(p-2)/2}u_e\\[.3cm]
  \displaystyle -\imath u_e'-v_e=\big(\ep|u_e|^2+|v_e|^2)^{(p-2)/2}v_e.
 \end{array}
 \right.
\]
 As a consequence, the limit problem at $\ep =0$ is \eqref{eq:defocusing}, in place of \eqref{eq-NLSaux}, and since (as mentioned in Remark \ref{rmk:negativeedge}) this has no nontrivial solution in $\dom(-\Delta_\G)$ the regime $\omega\to-m$ cannot be discussed with our method.

\end{remark}

\subsubsection{Solutions for small $\ep$}
\label{sec-smallep}

The solutions obtained for $\ep=0$ in the previous section suggest the use of the Implicit Function Theorem in order to prove existence of solutions of \eqref{eq:funceq} for small values of $\ep$.

It is not hard to check that the map $\F$ is of class $C^1$, so that one has to prove the following

\begin{prop}
\label{prop:linearized}
Let $U$ be a generic real-valued solution of \eqref{eq-NLSaux}. Then, the differential of $\F$ with respect to the second and third variables evaluated at $(0,u_0,v_0)$, with $u_0,\,v_0$ defined by \eqref{eq:uU}, is an isomorphism from $X\times Y$ to $L^2(\G,\C^2)$.
\end{prop}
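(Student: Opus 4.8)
The plan is to compute the differential $d_{(u,v)}\F(0,u_0,v_0)$ explicitly, identify it as a first-order linear system on the $N$-star graph with Kirchhoff-type vertex conditions, and show it is invertible by reducing the question, via the substitution that produced the NLSE in the first place, to a Schr\"odinger-type operator whose kernel is understood. Concretely, differentiating \eqref{eq:map} at $\ep=0$, $(u_0,v_0)$ (recalling $v_{0,e}=-\imath U_e'/(2m)$ and that $U$ is real) gives, on each edge $e$, a bounded linear map sending $(h,k)\in X\times Y$ to
\[
L(h,k)_{|_e}=\begin{pmatrix}-\imath k_e' + h_e - (p-1)U_e^{p-2}h_e\\[.2cm] -\imath h_e' - 2m\,k_e\end{pmatrix},
\]
since at $\ep=0$ the nonlinearity reduces to $|u_e|^{p-2}u_e$ and the $\ep$-dependent terms drop out. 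So one must show: for every $(f,g)\in L^2(\G)\times L^2(\G)$ there is a unique $(h,k)\in X\times Y$ solving $-\imath k_e'+h_e-(p-1)U_e^{p-2}h_e=f_e$ and $-\imath h_e'-2m k_e=g_e$ on each half-line, with $h$ continuous at $\v$ and $\sum_{e\succ\v}k_e^\pm(\v)=0$.

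The key step is to eliminate $k$. From the second equation, $k_e=\tfrac{1}{2m}(-\imath h_e'-g_e)$; note $k\in H^1$ forces $h\in H^2$ edgewise, and the Kirchhoff condition $\sum_e k_e^\pm(\v)=0$ becomes $\sum_{e\succ\v}(h_e')^\pm(\v)=-\tfrac{1}{\,}\sum_{e\succ\v}g_e^\pm(\v)\cdot(\ldots)$ — i.e. a Kirchhoff (derivative-sum) condition on $h$ with an inhomogeneous right-hand side. Substituting into the first equation and differentiating yields
\[
-\frac{h_e''}{2m}+h_e-(p-1)U_e^{p-2}h_e = \widetilde f_e,
\]
where $\widetilde f_e$ is an explicit combination of $f_e$ and $g_e'$ (so $\widetilde f\in L^2$ provided $g\in H^1$; for general $g\in L^2$ one argues by density, or directly in the weak formulation). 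Thus the invertibility of $d_{(u,v)}\F$ is equivalent to the invertibility of the operator
\[
\mathcal{M}_U:=-\frac{\lap}{2m}+\mathrm{Id}-(p-1)U^{p-2}
\]
on $\dom(-\lap)$ — equivalently, to showing $0\notin\sigma(\mathcal M_U)$, i.e. that $\mathcal M_U$ has trivial kernel and is surjective. Surjectivity follows from injectivity together with self-adjointness and the fact that $0$ lies below the essential spectrum $[1,+\infty)$ of $\mathcal M_U$ (since $U^{p-2}$ decays exponentially on each half-line), so it is an isolated point of the spectrum if present at all, hence Fredholm of index zero there.

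Therefore the main obstacle — and the heart of the proof — is the \emph{nondegeneracy} statement: $\ker\mathcal M_U=\{0\}$ in $\dom(-\lap)$. Here I would distinguish the two cases. When $N$ is odd, $U$ is the single symmetric soliton state \eqref{eq-Nodd}, and one can transplant the classical nondegeneracy of the soliton $\varphi$ of $-\tfrac{w''}{2m}+w=w^{p-1}$ on the line (whose linearized kernel is spanned by $\varphi'$, an odd function) to the star: a function in $\ker\mathcal M_U$ restricts on each half-line to a multiple of $\varphi'$, and the Kirchhoff derivative condition $\sum_e h_e'(\v)=0$ together with continuity $h_e(\v)=h_f(\v)$ forces all these multiples to vanish, because $\varphi'(0)\neq0$ while $\varphi(0)\neq 0$ and $\varphi'$ is odd (so matching continuity at the vertex across $N$ odd edges is impossible for a nonzero solution). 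When $N$ is even and $U=U_\alpha$ is a shifted state \eqref{eq-Neven}, the analysis is analogous but one must check that the particular translate $\varphi(\cdot\mp\alpha)$ appearing on each edge still yields a rigid system at the vertex; the half-line pieces are now translated solitons, and one uses that on a half-line the decaying solution of the line-linearized ODE is one-dimensional (spanned by $\varphi'(\cdot\mp\alpha)$) so the same counting/parity argument applies. I expect the even case to require slightly more care but to go through by the same mechanism; one can also invoke the known spectral analysis of the Kirchhoff Laplacian perturbed by a soliton potential from \cite{ACFN-EPL} and the NLSE literature cited in the paper. Once $\ker\mathcal M_U=\{0\}$ is established, the bookkeeping translating this back to invertibility of $d_{(u,v)}\F$ on $X\times Y$ (keeping track of the inhomogeneous vertex term coming from $g$) is routine.
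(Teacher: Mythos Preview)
Your overall architecture matches the paper's: compute the linearization, reduce injectivity to the linearized Schr\"odinger problem in $\dom(-\lap)$, and invoke a Fredholm mechanism for surjectivity. The surjectivity step is where you genuinely diverge. The paper stays with the first-order system and writes the differential as $J+K$ with $J$ an explicit isomorphism (Lemma~\ref{lem-J}, via Lax--Milgram on the reduced problem) and $K(U)$ compact (Lemma~\ref{lem-K}, via the decay of $U$), then applies the Fredholm alternative. You instead pass to the scalar operator $\mathcal M_U=-\tfrac{\lap}{2m}+1-(p-1)U^{p-2}$ and use self-adjointness plus Weyl's theorem. That route is legitimate in spirit, but your reduction is not clean for general $(f,g)\in L^2\times L^2$: the term $g'$ you introduce is only distributional, and the vertex condition you derive for $h$ carries an inhomogeneous term involving pointwise values $g_e(\v)$, which are not defined for $g\in L^2$. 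The ``density or weak formulation'' remark does not settle this; the paper's $J+K$ decomposition avoids both issues by never leaving the first-order system.

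The concrete gap is in your nondegeneracy argument for the odd-$N$ case. You assert $\varphi'(0)\neq 0$, but $\varphi(t)=c_p\,\sech^{\gamma_p}(\delta_{p,m}t)$ is even, so in fact $\varphi'(0)=0$. Consequently the continuity condition $c_e\varphi'(0)=c_f\varphi'(0)$ at the vertex is vacuous and does \emph{not} force the edge-constants $c_e$ to coincide; the only constraint comes from Kirchhoff, namely $\varphi''(0)\sum_e c_e=0$, i.e.\ $\sum_e c_e=0$, which for $N\geq 2$ admits nonzero solutions. Your argument therefore does not close the kernel. The paper's version of this step (Lemma~\ref{lem-inj}) proceeds differently: it uses the strict concavity $U_e''(0)<0$ to show that the particular function $U'=(U_e')_e$ violates \eqref{eq-kirch}, rather than your continuity claim; note, though, that the paper too only explicitly rules out the choice $c_e\equiv 1$, so the treatment of general linear combinations $(c_e)$ deserves scrutiny in both arguments. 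For the even case your ``same mechanism'' remark is likewise too optimistic: when $\alpha>0$, the tangent direction $\partial_\alpha U_\alpha$ lies in $\dom(-\lap)$ and solves the linearized equation edgewise, so the parity/counting heuristic does not immediately exclude it.
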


We denote the differential of $\F$ with respect to the second and third variables evaluated at $(0,u_0,v_0)$ by $D_{(X, Y)}\F(0,u_0,v_0)$. The first step is to show that

\begin{lemma}
\label{lem-inj}
The operator $D_{(X, Y)}\F(0,u_0,v_0):X\times Y\to L^2(\G,\C^2)$ is injective.
\end{lemma}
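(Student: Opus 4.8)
The plan is to compute $D_{(X,Y)}\F(0,u_0,v_0)$ explicitly and reduce injectivity to a statement about a linear ODE system on each half-line together with the vertex conditions. Differentiating \eqref{eq:map} at $\ep=0$ in the directions $(h,k)\in X\times Y$, and using $v_{0,e}=-\imath U_e'/(2m)$, one finds that the nonlinear term $\big(|u_e|^2+\ep|v_e|^2\big)^{(p-2)/2}v_e$ contributes, at $\ep=0$, only through its dependence on $u$ (since the prefactor vanishes to order $|u_e|^{p-2}$ and $v$ enters that factor multiplied by $\ep$), so the linearization of the first component is
\[
-\imath k_e' + h_e - |U_e|^{p-2}k_e - (p-2)|U_e|^{p-4}U_e\,\re\!\big(\overline{U_e}\,h_e\big)\,v_{0,e},
\]
while the second component linearizes to $-\imath h_e' - 2m k_e$ (all $\ep$-weighted terms drop). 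Because $U$ is real and does not change sign (Remark \ref{rem-pos}), $|U_e|^{p-2}=U_e^{p-2}$ and $|U_e|^{p-4}U_e\,\re(\overline{U_e}h_e)=U_e^{p-2}\re(h_e)$.

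Next I would exploit the decoupling this produces. Suppose $(h,k)\in\ker D_{(X,Y)}\F(0,u_0,v_0)$. From the second equation $k_e=-\imath h_e'/(2m)$, so $k$ is determined by $h$; moreover $k\in Y$ forces the Kirchhoff condition $\sum_{e\succ\v}(h_e')^{\pm}(\v)=0$ (i.e. $\sum_e h_e'(0)=0$ at the central vertex, with appropriate signs), which combined with $h\in X$ (continuity of $h$) means exactly that $h\in\dom(-\lap)$. Substituting $k_e=-\imath h_e'/(2m)$ into the first equation and separating real and imaginary parts should yield, for the real part $\rho_e:=\re h_e$, the linearized Schr\"odinger equation
\[
-\frac{\rho_e''}{2m} + \rho_e - (p-1)U_e^{p-2}\rho_e = 0,
\]
and for the imaginary part $\iota_e:=\im h_e$ the equation $-\iota_e''/(2m)+\iota_e - U_e^{p-2}\iota_e=0$, both on every half-line, with the Kirchhoff/continuity conditions at $\v$ and decay at infinity. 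So injectivity of $D_{(X,Y)}\F$ is equivalent to: the linearized operator $L_+ := -\frac{1}{2m}\frac{d^2}{dx^2}+1-(p-1)U^{p-2}$ and the operator $L_- := -\frac{1}{2m}\frac{d^2}{dx^2}+1-U^{p-2}$ on the $N$-star graph with Kirchhoff conditions both have trivial kernel in $H^1(\G)$.

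The kernel of $L_-$ is easy: $U$ itself solves $L_- U=0$ edgewise, and since $U>0$ it is the ground state of the half-line Schr\"odinger operator, so any $H^1$ solution on a half-line is a multiple of $\varphi$ on that edge; the Kirchhoff condition $\sum \varphi'(0)\cdot(\pm)=0$ with $\varphi'(0)<0$ (or the analogue for the shifted solitons in the even case) then forces all coefficients to vanish — this is precisely where the $N$-star geometry and the structure of the bound states from \cite{ACFN-EPL} are used. The main obstacle is the kernel of $L_+$, the nondegeneracy of the soliton: one must show $L_+$ has no nontrivial $H^1$ kernel on the graph. On the line (and half-line) the nondegeneracy of $\varphi$ is classical — $\varphi'$ spans the kernel of $L_+$ on $\R$, but $\varphi'$ is odd hence not in $H^1(\R^+)$ with the relevant boundary behavior, and a Sturm/Wronskian argument (or explicit ODE analysis with the $\sech^{\gamma_p}$ profile) pins down the one-dimensional solution space on a half-line; then the continuity of $\rho$ and the Kirchhoff condition on $\rho'$ across the $N$ edges must be shown to kill the remaining constants. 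I expect the write-up to assemble these ODE facts — possibly citing known nondegeneracy results — and then run the same linear-algebra-at-the-vertex argument as for $L_-$, with the even-$N$ case (shifted solitons) requiring a little extra care because the profiles on different edges are translates of each other.
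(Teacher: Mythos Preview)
Your reduction to the pair of scalar problems $L_\pm$ is the natural decomposition when one works over the full complex space $X\times Y$, but the argument for $L_-$ has a genuine error that breaks the approach. You assert that the Kirchhoff condition together with ``$\varphi'(0)<0$'' kills all edge coefficients; however $\varphi(t)=c_p\,\sech^{\gamma_p}(\delta_{p,m}t)$ attains its maximum at $t=0$, so $\varphi'(0)=0$. In fact $U$ itself lies in $\dom(-\lap)$ and solves $L_-U=0$ edgewise (this is exactly \eqref{eq-NLSaux}), so $\ker L_-\neq\{0\}$ on the graph for every admissible $U$, in both the odd and the even case. This is not an accident: the kernel element $(h,k)=(\imath U,\imath v_0)\in X\times Y$ is the infinitesimal generator of the gauge symmetry $(u,v)\mapsto(e^{\imath\theta}u,e^{\imath\theta}v)$ of \eqref{eq:diracscaled}. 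Hence $D_{(X,Y)}\F(0,u_0,v_0)$ is \emph{not} injective on the full complex $X\times Y$, and your plan cannot go through as written.

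The paper avoids this by (implicitly) working in the invariant real subspace where $u$ is real-valued and $v$ purely imaginary --- note that $(u_0,v_0)=(U,-\imath U'/2m)$ lies there and \eqref{eq:diracscaled} preserves this structure. In that subspace the Fr\'echet derivative of $|u|^{p-2}u$ in a real direction $h$ is exactly $(p-1)U^{p-2}h$, so only the $L_+$ equation appears (the gauge direction is excluded and no $L_-$ problem arises). The paper then disposes of $\ker L_+$ by a Wronskian argument: on each half-line $U_e'$ solves the linearized ODE, Abel's identity forces any linearly independent solution to grow and hence leave $H^2(\h_e)$, so every $H^2$ solution is a multiple of $U_e'$; finally one checks directly from \eqref{eq-Nodd}--\eqref{eq-soliton} that $U'$ violates the vertex conditions (Kirchhoff when $N$ is odd, continuity when $N$ is even and $\alpha>0$). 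To salvage your approach you must either restrict to this real subspace from the start, or explicitly quotient out the gauge orbit before invoking the Implicit Function Theorem.
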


\begin{proof}
Our aim is to prove that $\ker\big\{D_{(X, Y)}\F(0,u_0,u_0)\big\}$ is trivial. Suppose preliminarily that $U$ is positive, as this is not restrictive because of Remark \ref{rem-pos}. Easy computations entail that this is equivalent to prove that the unique pair $(h,k)^T\in X\times Y$ that solves
\be
\label{eq:linearnls}
 \left\{\begin{array}{l}
  \displaystyle -\imath k_e'+h_e=(p-1)U_e^{p-2}h_e\\[.2cm]
  \displaystyle -\imath h_e'-2mk_e=0
 \end{array}
 \right.,\qquad e=1,\dots,N,
\ee
is $(0,0)^T$, or rather (arguing as in Section \ref{sec-nls}) that the unique solution $h\in\dom(-\lap)$ of 
\be
\label{eq:linearization}
\left\{
\begin{array}{l}
\displaystyle h\in\dom(-\lap),\\[.2cm]
\displaystyle  \f{h_e''}{2m}+(p-1)U_e^{p-2}h_e=h_e,\qquad e=1,\dots,N
\end{array}
\right.
 \ee
is the trivial one.

To this aim, fix a generic half-line $e$, and discuss all the possible nontrivial solutions of the equation in \eqref{eq:linearization}. More precisely, we first deal with ODE in \eqref{eq:linearization} and then consider vertex conditions.
\smallskip

Simply differentiating the $e$-th equation in \eqref{eq-NLSaux}, one easily sees that $U_{e}'$ is a solution of the equation $e$-th equation in \eqref{eq:linearization} and $U_{e}'\in H^2(\h_e)$. Now, denote by $w$ another solution of the equation, so that $w$ and $U_{e}'$ are linearly independent. Abel's identity implies that the Wronskian determinant of $U'_e$ and $w$ is constant, that is
\be
\label{eq:wronsk}
U''_e w-U'_{e}w'=c,\qquad\mbox{with $c\in\C$.}
\ee 
Clearly, $c\neq0$, since otherwise $w$ and $U_{\hat{e}}'$ would be linearly dependent. Also, integrating (by parts) \eqref{eq:wronsk} we have
\begin{equation}
\label{eq:wronint}
cx_e=U'_e(x_e)w(x_e)-U'_e(0)w(0)-2\int_0^{x_e} U'_e(y_e)w'(y_e)\,dy_e,\qquad\forall x_e\in\h_e.
\end{equation}
As a consequence, if we assume that $w\in H^{2}(\h_e)$ and pass to the limit in \eqref{eq:wronint} as $x_e\to+\infty$, then there results that the first term at the r.h.s. tends to zero, the second and the third ones are finite, while the l.h.s. diverges, which is a contradiction. Hence, the unique (up to a multiplicative constant) solution of the equation in \eqref{eq:linearization} that belongs to $H^{2}(\h_e)$ is given by $U_e'$.

Now, it is left to discuss if the function $U'=(U_e')_{e=1}^N$ is in $\dom(-\lap)$, namely if it satisfies \eqref{eq-cont} and \eqref{eq-kirch}. Note that $U$ is given by \eqref{eq-Nodd} or \eqref{eq-Neven} depending on the fact that $N$ is odd or even (respectively). Assume, first, that is of type \eqref{eq-Nodd}, namely, it consists of the same function \eqref{eq-soliton} on each half-line. In this case \eqref{eq-cont} is satisfied, but an easy computation shows that $U_e$ is strictly concave at $x_e=0$ on each half-line and hence $U'$ cannot satisfy \eqref{eq-kirch}. On the other hand, assume that $U$ is of type \eqref{eq-Neven}. However, since U is not symmetric, but satisfies \eqref{eq-kirch}, $U'$ cannot fulfill \eqref{eq-cont}.

Summing up, there are no nontrivial solutions to \eqref{eq:linearization}, and this concludes the proof.
\end{proof}

Now, we want to prove that $D_{(X, Y)}\F(0,u_0,v_0)$ is surjective, using an argument based on the Fredholm's Alternative. Precisely, from classical Perturbation Theory for linear operators (see e.g. \cite{K}), it suffices to show that $D_{(X, Y)}\F(0,u_0,v_0)$ is the sum of an isomorphism and of a compact operator.

Therefore, let
\begin{equation}
\label{eq-dec}
 D_{(X, Y)}\F(0,u_0,v_0)=J+K(U),
\end{equation}
with $J,K(U):X\times Y\to L^2(\G,\C^2)$ defined by
\be
\label{eq:J}
J[h,k]_{|e}=J_e[h_e,k_e]:=(-\imath k_e'+h_e,-\imath h_e'-2mk_e)^T,\qquad e=1,\dots,N
\ee
and
\be
\label{eq:K}
K(U)[h,k]_{|_e}=K_e(U_e)[h_e,k_e]=(-(p-1)|U_e|^{p-2}h_e,0)^T,\qquad e=1,\dots,N.
\ee

\begin{lemma}
\label{lem-J}
 The operator $J:X\times Y\to L^2(\G,\C^2)$ is an isomorphism.
\end{lemma}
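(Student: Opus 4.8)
The plan is to recognize $J$, up to an elementary change of variables, as a Dirac--Kirchhoff operator with a different \emph{positive} mass, and then to invoke its already-recorded spectral gap. First note that $X\times Y$ coincides with $\dom(\Dg)$: the two vertex conditions \eqref{eq-cont_bis}--\eqref{eq-kirch_bis} that cut out $\dom(\Dg)$ inside $H^1(\G,\C^2)$ are exactly those defining $X$ and $Y$, and the domain of the Dirac--Kirchhoff operator depends neither on the mass nor on $c$. Hence $J$ is a bounded operator from $\dom(\Dg)$, equipped with the graph norm (equivalent to the $H^1(\G,\C^2)$-norm, as recalled at the beginning of Section~\ref{sec:lwpandgwp}), into $L^2(\G,\C^2)$; by the Bounded Inverse Theorem it then suffices to prove that $J$ is a bijection.

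To this end I would set $\mu:=\sqrt{2m}>0$ and introduce the (trivially bounded, with bounded inverse) rescalings $S_1(h,k):=(h,\mu k)$ on $X\times Y$ and $S_2(f,g):=(\mu f,g)$ on $L^2(\G,\C^2)$. A one-line computation, using $\mu^2=2m$, gives
\[
 S_2\big(J[h,k]\big)=\big(-\imath(\mu k)'+\mu h,\ -\imath h'-\mu(\mu k)\big)=\Dg^{(\mu)}\big(h,\mu k\big)=\Dg^{(\mu)}\big(S_1(h,k)\big),
\]
where $\Dg^{(\mu)}$ denotes the Dirac--Kirchhoff operator with mass $\mu$ (and $c=1$), whose domain is again $X\times Y$. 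Thus $J=S_2^{-1}\circ\Dg^{(\mu)}\circ S_1$. Now the same computation that yields $\sigma(\Dg)=(-\infty,-m]\cup[m,+\infty)$ (recalled in Section~\ref{subsec:back}; see \cite{BCT-SIMA,BCT-p}) gives $\sigma(\Dg^{(\mu)})=(-\infty,-\mu]\cup[\mu,+\infty)$, so that $0\in\rho(\Dg^{(\mu)})$ because $\mu>0$, and $\Dg^{(\mu)}\colon\dom(\Dg^{(\mu)})\to L^2(\G,\C^2)$ is an isomorphism. Composing with $S_1$ and $S_2$, $J$ is an isomorphism from $X\times Y$ onto $L^2(\G,\C^2)$, as claimed.

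If one prefers a self-contained argument that does not quote the spectrum, the same conclusion follows by hand: $J[h,k]=0$ forces $h_e=\imath k_e'$ and $k_e''=2m\,k_e$ on each half-line, whence the only $H^1$ solution is $k_e(x_e)=B_e\,e^{-\sqrt{2m}\,x_e}$, $h_e=\imath k_e'$, and then \eqref{eq-cont_bis} for $h$ forces all the $B_e$ to coincide while \eqref{eq-kirch_bis} for $k$ forces $\sum_e B_e=0$, so $B_e=0$ for all $e$ (injectivity); and for surjectivity the $H^1$ solutions of $J_e[h_e,k_e]=(f_e,g_e)$ on each half-line form a one-parameter affine family (a particular solution built by variation of parameters on the decaying fundamental solution, plus an arbitrary multiple of the decaying homogeneous solution $e^{-\sqrt{2m}\,x_e}(\imath\sqrt{2m},1)^T$), and the $N$ free constants are uniquely fixed by the $N-1$ continuity conditions \eqref{eq-cont_bis} on $h$ together with the single condition \eqref{eq-kirch_bis} on $k$, the corresponding $N\times N$ linear system being nonsingular by inspection. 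In this direct route the only genuinely delicate point is surjectivity together with the quantitative control on the inverse; the reduction to $\Dg^{(\mu)}$ bypasses it entirely, turning the lemma into an immediate consequence of the spectral gap of the Dirac--Kirchhoff operator.
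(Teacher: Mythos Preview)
Your argument is correct and rather elegant: the key observation that $J=S_2^{-1}\circ\Dg^{(\mu)}\circ S_1$ with $\mu=\sqrt{2m}$ checks out line by line, and since the paper already records $\sigma(\Dg)=(-\infty,-mc^2]\cup[mc^2,+\infty)$ for any noncompact graph with finitely many edges (hence in particular for the $N$-star and for any positive mass), $0$ lies in the resolvent set of $\Dg^{(\mu)}$ and the lemma follows at once.

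This is a genuinely different route from the paper's. The paper proceeds by hand: for injectivity it eliminates $k$ via $k_e=-\imath h_e'/2m$ and reduces $J[h,k]=0$ to the Schr\"odinger problem $-h_e''/2m+h_e=0$ in $\dom(-\lap)$, which has only the trivial solution after integration by parts; for surjectivity it shows that $\operatorname{Ran}(J)$ is dense and closed in $L^2(\G,\C^2)$, solving $J[h,k]=(a,b)^T$ first for compactly supported $a$, $b$ by reducing to a coercive form handled with Lax--Milgram (and similarly swapping the roles of $h$ and $k$), and then using the resulting a priori bound to pass to the closure. Your approach trades all of this for a single rescaling and an appeal to the spectral gap already established in \cite{BCT-SIMA,BCT-p}, which is shorter and conceptually cleaner; the paper's argument, in turn, is self-contained and makes explicit the link with the Kirchhoff Laplacian that underlies the whole section. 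Your fallback sketch (the direct ODE argument) is also along sound lines, though the surjectivity part would need a bit more care to be airtight; the spectral-gap reduction is the stronger of your two arguments.
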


\begin{proof}
 Since $J$ is clearly continuous, one has to prove only injectivity and surjectivity.
 
 \smallskip
 \emph{Step (i): J is injective.} Let $(h,k)^T\in X\times Y$ be a solution of $J[h,k]=0$. Arguing as in the proof of Lemma \ref{lem-inj}, one can check that this is equivalent to claim that
\[
h\in\dom(-\lap)\qquad\text{and}\qquad-\f{h_e''}{2m}+h_e=0,\qquad e=1,\dots,N.
\]
However, multiplying by $h_e^*$, integrating by parts and using \eqref{eq-cont} and \eqref{eq-kirch}
\[
\int_\G\big(|h'|^2+|h|^2\big)\dx=0,
\]
and thus $h\equiv0$ and, as $k_e=\frac{-\imath h_e'}{2m}$ on each halfline, $k\equiv0$.

 \smallskip
 \emph{Step (ii): J is surjective.} In order to prove this we prove that the range of $J$ is dense and closed in $L^2(\G,\C^2)$. Therefore, let $a,\,b\in \bigoplus_{e=1}^n C_0^\infty(\h_e\backslash\{0\})$. We want to prove that there exists $(h,k)^T\in X\times Y$ such that
\be
\label{eq:surj}
J[h,k]=(a,b)^T,
\ee
namely, such that
\[
 \left\{\begin{array}{l}
  \displaystyle k_e'=-\imath h_e +\imath a_e\\[.2cm]
  \displaystyle h_e'=2m\imath k_e+\imath b_e
 \end{array}\right.,\qquad e=1,\dots,N.
\]
First, if we assume that $b\equiv0$, then the problem is to find
\be
\label{eq:bzero2}
 h_1\in\dom(-\lap)\qquad\text{such that}\qquad -\f{h_{1,e}''}{2m}+h_{1,e}=a_e,\qquad e=1,\dots,N,
\ee
or rather, in a weaker form, $h_1\in X$ such that
\[
 q(h_1,\varphi)=\langle a,\varphi\rangle_{L^2(\G)},\qquad\forall\varphi\in X,
\]
with $q:X\times X\to\C$ defined as
\[
  q(\mu,\varphi):=\frac{1}{2m}\sum_{e=1}^N\langle \mu'_e,\varphi'_e\rangle_{L^2(\h_e)}+\langle \mu_e,\varphi_e\rangle_{L^2(\h_e)}.
\]
However, simply using the Lax-Milgram Lemma, one can immediately see that such function $h_1$ does exist and also satisfies
\[
\|h_1\|_{H^1(\G)}\leq\|a\|_{L^2(\G)}.
\]
As a consequence, letting $k_{1,e}=\frac{-\imath h_{1,e}'}{2m}$, the pair $(h_1,k_1)^T\in X\times Y$ solves \eqref{eq:surj} with $b\equiv0$ and fulfills
\be
\label{eq:bounda}
\|(h_1,k_1)^T\|_{H^1(\G,\C^2)}\leq C\|a\|_{L^2(\G)}.
\ee
Now, arguing exactly as before (with $k$ in place of $h$ and $Y$ in place of $X$), one can also find a solution $(h_2,k_2)^T\in X\times Y$ of \eqref{eq:surj} with $a\equiv0$ such that
\[
\|(h_2,k_2)^T\|_{H^1(\G,\C^2)}\leq C\|b\|_{L^2(\G)}.
\]
Hence, using the linearity of $J$, one has that for every $a,\,b\in\bigoplus_{e=1}^n C_0^\infty(\h_e\backslash\{0\})$ there exists $(h,k)^T\in X\times Y$ that satisfies \eqref{eq:surj} and
\be
\label{eq:bound}
\|(h,k)^T\|_{H^1(\G,\C^2)}\leq C\|(a,b)^T\|_{L^2(\G,\C^2)},
\ee
and, since $\bigoplus_{e=1}^nC_0^\infty(\h_e\backslash\{0\})$ is dense in $L^2(\G)$, there results that $\operatorname{Ran}(J)$ is dense in $L^2(\G,\C^2)$.

It is, then, left to prove that $\operatorname{Ran}(J)$ is closed in $L^2(\G,\C^2)$. To this aim, consider a generic sequence
\be
\label{eq:converging}
\big((a_n,b_n)^T\big)_n\subset L^{2}(\G,\C^2),\qquad (a_n,b_n)\to(a,b)\quad\text{in}\:L^{2}(\G,\C^2),
\ee 
such that, for every $n$, there exists $(h_n,k_n)\in X\times Y$ satisfying
\be\label{eq:range}
J[h_n,k_n]=(a_n,b_n).
\ee
Since in Step (i) we proved that $J$ is injective, it admits a left inverse $J^{-1}$, which is a continuous map from $L^2(\G,\C^2)$ to $H^1(\G,\C^2)$ by \eqref{eq:bound} and the density of $\operatorname{Ran}(J)$. Then, by \eqref{eq:converging} and \eqref{eq:range}, we have that
\begin{equation}
\label{eq:closure}
(h_n,k_n)^T=J^{-1}[a_n,b_n]\longrightarrow J^{-1}[a,b]=:(h,k)^T\qquad\text{in}\: H^{1}(\G,\C^2).
\end{equation}
Hence, in order to conclude we just need to show that $(h,k)^T\in X\times Y$, namely, that $h$ satisfies \eqref{eq-cont} and $k$ satisfies \eqref{eq-kirch}. However, this is immediate by \eqref{eq:closure} since $(h_n,k_n)^T\in X\times Y$, for every $n$, and $X,\,Y\hookrightarrow\bigoplus_{e=1}^nC(\h_e)$, and thus $\operatorname{Ran}(J)$ is closed in $L^2(\G,\C^2)$, which completes the proof.
\end{proof}

\begin{lemma}
\label{lem-K}
 The operator $K(U):X\times Y\to L^2(\G,\C^2)$ is compact.
\end{lemma}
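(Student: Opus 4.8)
The plan is to reduce the statement to the compactness of a single multiplication operator, and then to treat it by a standard truncation argument that exploits the decay at infinity of the profile $U$.

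First I would observe that, since the second component of $K(U)[h,k]$ vanishes identically and is independent of $k$, it suffices to prove that the linear map
\[
T:X\longrightarrow L^2(\G),\qquad (Th)_{|_e}:=W_e\,h_e,\qquad W_e:=(p-1)|U_e|^{p-2},\quad e=1,\dots,N,
\]
is compact, where $X$ carries the $H^1(\G)$-norm. To control $W$ I would use that $U\in\dom(-\lap)\subset H^2(\G)$, whence each $U_e$ belongs to $H^1(\h_e)$ and is therefore bounded and vanishes at $+\infty$; since $p>2$, the same is true of $W_e=(p-1)|U_e|^{p-2}$. (Alternatively, this is immediate from the explicit formulas \eqref{eq-Nodd}--\eqref{eq-soliton}.) In particular $W\in L^\infty(\G)$ and $\|W_e\|_{L^\infty(\{x_e>R\})}\to0$ as $R\to+\infty$, for every $e=1,\dots,N$.

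Next I would introduce the truncated operators. For $n\in\N$ let $\chi_n$ be the characteristic function of $\bigcup_{e=1}^N[0,n]\subset\G$, and set $T_nh:=\chi_n\,Wh$. Each $T_n$ is compact: if $(h_j)_j$ is bounded in $X\subset H^1(\G)$, then its restriction to each of the $N$ bounded intervals $[0,n]$ is bounded in $H^1([0,n])$, so by the compact Sobolev embedding $H^1([0,n])\hookrightarrow L^2([0,n])$ (applied finitely many times) some subsequence converges in $L^2\big(\bigcup_{e=1}^N[0,n]\big)$; multiplying by the bounded function $\chi_nW$ then produces an $L^2(\G)$-convergent subsequence of $(T_nh_j)_j$. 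On the other hand, the decay of $W$ yields
\[
\|(T-T_n)h\|_{L^2(\G)}=\|(1-\chi_n)Wh\|_{L^2(\G)}\leq\Big(\max_{e=1,\dots,N}\|W_e\|_{L^\infty(\{x_e>n\})}\Big)\|h\|_{L^2(\G)}\leq C_n\|h\|_{H^1(\G)},
\]
with $C_n\to0$ as $n\to\infty$, so that $T_n\to T$ in the operator norm. Since the compact operators form a closed subspace of the bounded operators from $X$ to $L^2(\G)$, $T$ is compact, and hence so is $K(U)$.

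This argument is essentially routine; the only points deserving care are that the truncated multiplications genuinely factor through the compact embedding $H^1\big(\bigcup_{e=1}^N[0,n]\big)\hookrightarrow L^2\big(\bigcup_{e=1}^N[0,n]\big)$, and that the \emph{uniform} decay of $W$ along the half-lines upgrades the naive strong statement $T_nh\to Th$ to operator-norm convergence, which is precisely what makes the closedness of the ideal of compact operators applicable.
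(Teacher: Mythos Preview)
Your proof is correct and follows essentially the same approach as the paper: both exploit the decay of $|U|^{p-2}$ at infinity together with the compact embedding $H^1\hookrightarrow L^2$ on bounded pieces of the graph. The only difference is in packaging---the paper works directly with a bounded sequence, splitting $\G$ into a compact part (where $L^2_{\mathrm{loc}}$ convergence holds) and a tail (controlled by the smallness of $U$), while you phrase the same idea as operator-norm approximation of $T$ by the truncated operators $T_n$ and invoke closedness of the compact operators.
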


\begin{proof}
Preliminarily, assume that $U$ is positive (since it is not restrictive). Since $K(U)$ is clearly continuous, we only prove that, for every bounded sequence $\big((h_n,k_n)^T\big)_{n}\subset X\times Y$, there exist $\big((h_{n_j},k_{n_j})^T\big)_{j}\subset X\times Y$ such that $\big(K(U)[h_{n_j},k_{n-j}]\big)_j$ is convergent in $L^2(\G,\C^2)$.

First note that, up to subsequences,
\be
\label{eq:tail}
(h_n,k_n)\longrightarrow (h,k)\qquad \text{in}\quad L_{loc}^2(\G,\C^2).
\ee
On the other hand, from \eqref{eq-Nodd}, \eqref{eq-Neven}, \eqref{eq-soliton}, one sees that for all $\eta>0$ there exists $M_\eta>0$ such that $U_e(x_e)<\eta$ for every $|x_e|>M_\eta$ and every $e=1,\dots,N$. Thus,
\begin{multline*}
\|K(U)[h_n,k_n]-K(U)[h,k]\|_{L^2(\G,\C^2)}=\|U^{p-2}(h_n-h)\|_{L^2(\Omega^c_\eta,\C^2)}+\\[.2cm]
+\|U^{p-2}(h_n-h)\|_{L^2(\Omega_\eta,\C^2)}\leq C\eta^{p-2}+o(1),\qquad\text{as}\quad n\to\infty.
\end{multline*}
where $\Omega_\eta$ is the compact metric space obtained cutting all the half-lines at distance $M_\eta$ from the common vertex and $\Omega^c_\eta=\G\backslash\Omega_\eta$. Hence
\[
\lim_{n}\|K(U)[h_n,k_n]-K(U)[h,k]\|_{L^2(\G,\C^2)}\leq C\eta^{p-2},\qquad\forall \eta>0,
\]
so that the statement follows.
\end{proof}

Now, we can combine all the previous results to prove Proposition \ref{prop:linearized}.

\begin{proof}[Proof of Proposition \ref{prop:linearized}]
 Since $D_{(X, Y)}\F(0,u_0,v_0)$ is clearly continuous, it is only necessary to prove that it is bijective. From Lemma \ref{lem-inj} we have that it is injective. On the other hand, Lemmas \ref{lem-J} and \ref{lem-K}, in view of \eqref{eq-dec}, show that 
 \[
  D_{(X, Y)}\F(0,u_0,v_0)=:J+K
 \]
 is the sum of an isomorphism and of a compact operator. As a consequence, by the Fredholm's alternative, injectivity and surjectivity are equivalent and then the claim is proved.  To see this, observe that 
 \[
 D_{(X, Y)}\F(0,u_0,v_0)=J+K=J(I+J^{-1}K)\,.
 \]
 Since $J^{-1}$ is an isomorphism, injectivity of $D_{(X, Y)}\F(0,u_0,v_0)$ implies the injectivity of $(I+J^{-1}K)$, which is of the form identity plus compact. Then the claim follows by the standard Fredholm's alternative (see, e.g., \cite[Thm. 6.6]{B}).
 
\end{proof}

Finally, we have all the ingredients to complete the proof of Theorem \ref{thm:standingwaves}.

\begin{proof}[Proof of Theorem \ref{thm:standingwaves}]
 From Section \ref{sec-nls} we know that \eqref{eq:funceq} admits the solution $(0,u_0,v_0)$ with $u_0,\,v_0$ defined by \eqref{eq:uU}, with $U$ a generic real-valued solution of \eqref{eq-NLSaux}. In addition, from Proposition \ref{prop:linearized}, the assumptions of the Implicit Function Theorem are satisfied in $(0,u_0,v_0)$. Hence, there exists $\ep_0>0$ and a $C^1$-map 
\[
(-\ep_0,\ep_0)\ni\ep\mapsto (u_\ep,v_\ep)\in X\times Y
\]
such that $(\ep,u_\ep,v_\ep)$ is a solution of \eqref{eq:funceq} for every $\ep\in(-\ep_0,\ep_0)$. Thus, the proof follows from the results obtained in Section \ref{sec-scaling}.
\end{proof}


\appendix


\section{Nonrelativistc limit of $\Dg$}
\label{sec:nonrel}

We prove  Proposition \ref{prop:nonrel}, namely we show that the operator $\Dg$ (introduced by Definition \ref{defi-dirac}) converges in the norm-resolvent sense to the Schr\"odinger operator either with Kirchhoff or with homogeneous $\delta'$ conditions at the vertices, according to the sign of the renormalization term $\pm mc^2$ (which is known as the ``rest energy'' of the particle and has to disappear when relativistic effects become negligible).

To this aim, we need a representation formula for the resolvent operator of $\Dgc\pm mc^2$ on $\C\backslash\R$. Note that here we will always make explicit the dependence of $\Dgc$ on $c$.

\begin{lemma}
\label{lem-rep}
For every $k\in\C\backslash\R$, there results that
\begin{multline}
\label{eq:resdecomp}
(\Dgc\mp mc^2-k)^{-1}=\\[.2cm]
\bigg(P^{\pm}\pm\frac{\Dm+k}{2mc^2}\bigg)\bigg(\I\mp\left(\frac{-\Delta}{2m}-k\right)^{-1}\frac{k^2}{2mc^2}\bigg)^{-1}\left(\frac{-\Delta}{2m}-k\right)^{-1},
\end{multline}
where $\Dm$ is an operator with the same domain as $\Dg$ and action
\[
 \Dm\psi_{|_e}=\Dme\psi_e:=-\imath c\sigma_{1}\psi_e',\qquad\forall e\in\E,
\]
$P^{\pm}$ are the projections on the first and the second component of the spinors given by
\[
 \begin{pmatrix} 1 & 0 \\ 0 & 0 \end{pmatrix}\qquad\text{and}\qquad\begin{pmatrix} 0 & 0 \\ 0 & 1 \end{pmatrix}\,,
\]
respectively, and $-\Delta$ is as in \eqref{eq-biglap}.
\end{lemma}

\begin{remark}
 Note that, both the operator $\Dm$ and the operator $-\Delta$ are clearly self-adjoint by definition.
\end{remark}

\begin{proof}[Proof of Lemma \ref{lem-rep}]
We only consider the case $(\Dgc-mc^2-k)^{-1}$, since the proof for $(\Dgc+mc^2-k)^{-1}$ is completely analogous.

Preliminarily note that
\be
\label{eq:dsquared}
\Dgc^2=-c^2\Delta+m^2c^4\I,
\ee
namely, $\Dgc^2$ has action
\[
 \Dgc^2\psi_{|e}=\Dec^2\psi_e=-c^2\psi_e''+m^2c^4\psi_e
\]
and the same domain as $-\Delta$, i.e.
\[
\dom(\Dgc^2):=\{\psi=(\varphi,\chi)^T\in H^2(\G,\C^2):\varphi\:\text{satisfies \eqref{eq-cont}$\&$\eqref{eq-kirch} and}\:\chi\:\text{satisfies \eqref{eq-contd}$\&$\eqref{eq-kirchd}}\},
\]
and it is, therefore, clearly self-adjoint.

Now, for every $k\in\C\backslash\R$, one easily sees that
\[
(\Dgc-mc^2-k)(\Dgc+mc^2+k)=-c^2\Delta-2mc^2k-k^2,
\]
whence, with some algebra,
\be
\label{eq:resexpr}
(\Dgc-mc^2-k)^{-1}=\frac{\Dgc+mc^2+k}{2mc^2}\left(\frac{-\Delta}{2m}-k-\frac{k^2}{2mc^2}\right)^{-1}.
\ee
In addition, using the identity 
\[
(A+B)^{-1}=(\I+A^{-1}B)^{-1}A^{-1}
\]
with 
\[
A:=\frac{-\Delta}{2m}-k\qquad\text{and}\qquad B=-\frac{k^2}{2mc^2},
\]
there results
\be
\label{eq:resexpress2}
(\Dgc-mc^2-k)^{-1}=\frac{\Dgc+mc^2+k}{2mc^2}\bigg(\I-\left(\frac{-\Delta}{2m}-k\right)^{-1}\frac{k^2}{2mc^2}\bigg)^{-1}\left(\frac{-\Delta}{2m}-k\right)^{-1}.
\ee
Then, as one can easily check
\[
\frac{\Dgc+mc^2+k}{2mc^2}=P^++\frac{\Dm+k}{2mc^2}
\]
which, plugging into \eqref{eq:resexpress2}, yields \eqref{eq:resdecomp}.
\end{proof}

\begin{proof}[Proof of Proposition \ref{prop:nonrel}]
 Again, we only consider the case $(\Dgc-mc^2-k)^{-1}$, since the proof for $(\Dgc+mc^2-k)^{-1}$ is completely analogous.
 
The goal is to prove that 
\[
 (\Dgc-mc^2-k)^{-1}\longrightarrow P^+\left(\frac{-\Delta}{2m}-k\right)^{-1},\qquad\text{in}\quad\L(L^2(\G,\C^2)),\quad  \text{as}\quad c\to\infty.
\]
By \eqref{eq:resdecomp}, easy computations yield
\[
 (\Dgc-mc^2-k)^{-1}-P^+\left(\frac{-\Delta}{2m}-k\right)^{-1}=A_c+B_c,
\]
where
\[
 A_c:=P^+(H_c-\I)\la\qquad\text{and}\qquad B_c:=\ga_c H_c\la,
\]
with
\[
 \la:=\left(\frac{-\Delta}{2m}-k\right)^{-1},\qquad H_c:=\bigg(\I-\la\tf{k^2}{2mc^2}\bigg)^{-1},\qquad\ga_c:=\f{\Dm+k}{2mc^2}.
\]

First, one sees that $\la$, $\tf{k^2}{2mc^2}$ and (when $c$ is large) $H_c$ belong to $\L(L^2(\G,\C^2))$ and that
\[
 \big\|\tf{k^2}{2mc^2}\big\|_{\L(L^2(\G,\C^2))}=\tf{|k|^2}{2mc^2}
\]
Hence, one gets  $A_c\to0$ in $\L(L^2(\G,\C^2))$, as $c\to\infty$.

On the other hand, the Open Mapping Theorem gives
\[
 \ga_c\in\L(\dom(\Dgc),L^2(\G,\C^2))\qquad\text{and}\qquad H_c\in \L(\dom(-\Delta)).
\]
Note that a priori such operator is bounded only on $L^2(\G,\C^2)$. However, for large $c$, its restriction to $\dom (-\Delta)$ can be expanded in Neumann series 
\[
H_c=\sum_{n\geq0}\left( \la\tf{k^2}{2mc^2}\right)^n \,,
\]
thus proving $ H_c\in \L(\dom(-\Delta))$. Moreover,
\[
 \dom(-\Delta)\hookrightarrow \dom(\Dgc)
\]
and 
\[
 \|\ga_c\|_{\L(\dom(\Dgc),L^2(\G,\C^2))}\lesssim c^{-1}
\]
Therefore, $B_c\to0$ in $\L(L^2(\G,\C^2))$, as $c\to\infty$, which concludes the proof.
\end{proof}


\end{document}